\DeclareMathOperator{\supp}{supp}
\DeclareMathOperator{\vol}{vol}
\newcommand\SL{\operatorname{SL}(2,\mathbb{R})}
\newcommand\GL{\operatorname{GL}(d,\mathbb{R})}
\newcommand\bN{\mathbb{N}}
\newcommand\bZ{\mathbb{Z}}
\newcommand\bR{\mathbb{R}}
\newcommand\cF{\mathcal{F}}
\newcommand\cM{\mathcal{M}}
\newcommand\cP{\mathcal{P}}
\newcommand\cQ{\mathcal{Q}}
\newcommand\cW{\mathcal{W}}
\newcommand\hFur{h^{\operatorname{Furs}}}
\title{Invariance principle in dynamical systems}
\authors{Karina Marin and Mauricio Poletti}
\abstract{%
	In this survey, we talk about what is known as \emph{Invariance Principle} in dynamical systems. It states that the disintegration of measures with zero center Lyapunov exponents admits some extra invariance by holonomies. We focus on explaining the basic definitions and ideas behind a series of results about the Invariance Principle and give some basic applications on how this is used in dynamical systems.
}
\keywords{Partial hyperbolic dynamics, Lyapunov exponents, Invariance Principle}
\begin{document}
	
	\section{Introduction}
	
	In order to study ergodic properties of a dynamical system $f\colon M\to M$ we need to understand the invariant probability measures. This means, probability measures $\mu$ such that $\mu(f^{-1}A)=\mu(A)$ for every measurable set $A\subset M$.
	
	When a diffeomorphism $f\colon M\to M$ is a partially hyperbolic map, one of the main features that it has is the existence of invariant foliations. By partially hyperbolic, we mean that on the tangent bundle of $M$ there exist invariant stable and unstable bundles, that respectively contracts and expands uniformly, and a center bundle that is dominated by the other ones, see Section~\ref{sec.holonomies} for the precise definition. The invariant foliations are tangent to these linear bundles, and are useful to study the dynamical behavior of $f$.
	
	We can relate foliations and invariant measures via disintegration, see Section~\ref{sec.disint} for the definitions and properties. Understanding these disintegrations gives a powerful tool to study ergodic properties of our dynamical systems. An example of this are the SRB measures of hyperbolic systems defined in \cite{Sin72, ruelle76, bowen2008}. These measures capture the statistical behavior of almost all points, with respect to the Lebesgue measure, and they are characterized by their disintegration along the unstable foliation.
	
	Another example of the importance of the disintegration along invariant foliations is the celebrated Ledrappier-Young formula \cite{LY85a} that relates Lyapunov exponents and metric entropy. The metric entropy of a measure-preserving dynamical system is a number that measures how chaotic the system is. The Lyapunov exponents measure the exponential expansion/contraction of the derivative. The formula states,
	$$
	h_\mu(f)=\int \sum_{\lambda_i(x)>0} \gamma_i(x)\lambda_i(x) d\mu(x)
	$$
	where $h_\mu(f)$ is the entropy of $f$ with respect to the invariant measure $\mu$ and $\lambda_i(x)$ are the Lyapunov exponents of $f$. The numbers $\gamma_i(x)\geq0$ which relate the two objects are defined using the disintegration of $\mu$. Specifically, $\gamma_1(x)$ is the Hausdorff dimension of the disintegration of the measure along the strongest unstable Pesin manifold on $x$. Moreover,  $\sum_{i=1}^k\gamma_i(x)$ is the dimension of the disintegration of the measure along the unstable Pesin manifold corresponding to the first $k$ exponents greater than zero.
	
	In this survey we will focus on partially hyperbolic dynamical systems that admit a center invariant foliation, and we will understand what the relation between entropy and the disintegration along this foliation is. The result we are going to explain shows that if the center foliation ``does not carry entropy'' then the disintegration of the measure is invariant under the holonomies of the stable and unstable foliations. This is known as the \emph{Invariance Principle}.
	
	In Sections~\ref{sec.disint} and \ref{sec.holonomies} we will give the basic definitions of disintegrations and holonomies. In Section~\ref{sec.IP}, we will state the Invariance Principle for partially hyperbolic dynamics and give the idea of the proof in a simpler case. We will give other versions of the Invariance Principle for random dynamics and cocycles in Sections~\ref{sec.Furstenberg} and \ref{sec.cocycles}. We finish with Section~\ref{sec.aplications} where we give applications of the Invariance Principle for studying genericity of non-zero center Lyapunov exponents, continuity of Lyapunov exponents, and rigidity of invariant measures.
	
	After finishing the manuscript, a survey on the same topic by François Ledrappier appeared online \cite{survey-Led}. Although both works address the same topic, their objectives differ:  Ledrappier’s survey focuses more on the proofs and provides a more detailed explanation of the techniques, whereas ours emphasizes applications to cocycles and partial hyperbolic dynamics.
	
	\section{Disintegration along foliations}\label{sec.disint}
	From now on, let $r\geq 1$ and $M$ be a compact $C^r$ manifold. A partition $\cP$ of $M$ is called measurable if it is a countable refinement of finite partitions.
	
	By Rokhlin Disintegration Theorem, given a probability measure $\mu$ and a measurable partition $\cP$, for $\mu$-almost every $x\in M$ there exists a probability measure $\mu^{\cP}_x$ such that
	\begin{itemize}
		\item $\mu^{\cP}_x(\cP(x))=1$, where $\cP(x)$ is the element of $\cP$ that contains $x$.
		\item $x\mapsto \mu^{\cP}_x(E)$ is measurable for every $E\subset M$ measurable.
		\item $\mu(E)=\int \mu^{\cP}_x(E)d\mu(x)$ for every $E\subset M$ measurable.
	\end{itemize}
	
	A \textit{foliation} of dimension $k$ on a manifold $M$  is  a  collection $\mathcal{F}$  of disjoint, connected, nonempty, immersed  $k$-dimensional $C^r$ submanifolds of $M$.
	In general, the partition by leaves of a foliation is not measurable, for example, the foliation of $\mathbb{T}^2$ by lines of irrational slope. Because of this, instead of taking the partition into global leaves, we do it locally.
	Suppose $M$ is a $d$-dimensional manifold. Let $\cF$ be a continuous foliation of $M$ with $C^r$ leaves of dimension $k\in \bN$. This means that for every $x\in M$, there exists a homeomorphism over its image
	$\Phi\colon B^k \times B^{d-k}\to M$, where $B^i$, $i\in \bN$, is the unit ball centered at the origin in $\bR^i$, such that
	$\Phi(0,0)=x$,
	and the restriction of $\Phi$ over $B^k\times\{y\}$ is a $C^r$ embedding depending continuously on $y$ and whose image is contained in some $\cF$ leaf.
	
	Given $\mu$ a probability measure, let $\mu_\Phi$ be the restriction of $\mu$ to the image of $\Phi$ defined as $\mu_\Phi(A)=\mu(A\cap\Phi(B^k\times B^{d-k}))$. Then, the partition $\cP_\Phi=\{\Phi(B^k\times\{y\}),y\in B^{d-k}\}$ is a measurable partition. By Rokhlin Disintegration Theorem, there exist $x\mapsto \mu^{\cF}_{\Phi,x}$ a disintegration of $\mu_\Phi$ with respect to $\cP_\Phi$.
	
	If $x\in M$ is contained in the image of two different foliation charts $\Phi_1$ and $\Phi_2$ then $\mu^{\cF}_{\Phi_1,x}=c \mu^{\cF}_{\Phi_2,x} $ for some constant $c>0$, see \cite[Lemma~3.2]{AVW}. This allows to define the disintegration of $\mu$ along $\cF$ up to a multiplicative constant $x\mapsto [\mu^\cF_x]$ where $[\mu^\cF_x]$ is an element of the projectivization of measures. This means $[\eta]=[\nu]$ if and only if it exists $c>0$ such that $\eta=c\nu$. When we write $\mu^{\cF}_x$, we mean a measure in the class of $[\mu^\cF_x]$.
	
	If the foliation is given by compact leaves and the quotient $\tilde{M}=M/\cF$ is a topological manifold, then the partition by global leaves is measurable. In this case, there is a canonical normalization such that $\mu^{\cF}_x(\cF(x))=1$. Also, as $\mu^{\cF}_x=\mu^{\cF}_y$ for $y\in \cF(x)$, we can see the disintegration as a function $\tilde{M}\ni \tilde{x}\mapsto \mu^{\cF}_{\tilde{x}}$.

	\subsubsection*{Entropy along an expanding foliation}
	
	Consider $f\colon M\to M$ a diffeomorphism and let $\cF$ be an invariant expanding foliation. That is, for every $x\in M$, $f(\cF(x))=\cF(f(x))$ and there exists $0<\sigma<1$ and $C>0$ such that
	$$
	\|Df^{-n}(x)|T_x\cF\|\leq C\sigma^n.
	$$
	
	Given two partitions \( \cP\) and \( \cQ \) such that \( \cQ(x) \subset \cP(x) \) for all \( x \in M \), we write \( \cQ \geq \cP \).
	
	We denote by $H_\mu(\cQ|\cP)$ the conditional entropy of the partition $\cQ$ with respect to $\cP$ and the measure $\mu$. It is defined as
	$$
	H_\mu(\cQ|\cP)=-\int \log(\mu^\cP_x(\cQ(x)))d\mu.
	$$

	Given an $f$-invariant probability measure $\mu$, a measurable partition $\eta$ is said to be increasing and subordinate to \( \cF \) if it satisfies the following conditions:
	\begin{itemize}
		\item[(a)] \( \eta(x) \subseteq \cF(x) \) for \(\mu\)-almost every \( x \).
		\item[(b)] \( f^{-1}(\eta) \geq \eta \) (increasing property).
		\item[(c)] \( \eta(x) \) contains an open neighborhood of \( x \) in \( \cF(x) \) for \(\mu\)-almost every \( x \).
	\end{itemize}
	
	The existence of measurable partitions that are increasing and subordinate to \( \cF\) was proved in \cite{LS82}.

	\begin{definition}\label{foliation.entropy}
		Let $\mu$ be an $f$-invariant ergodic probability measure, and let $\eta$  be an increasing measurable partition subordinate to \( \cF\). We define the $\cF$ metric entropy of \( \mu \) as
		$$h^{\cF}_\mu(f) = H_\mu(f^{-1}\eta\mid \eta)=-\int \log(\mu^\eta_x(f^{-1}\eta (x))d\mu(x) .$$
	\end{definition}
	
	This definition does not depend on the specific increasing measurable partition subordinate to \( \cF \). For details, see \cite{LS82, HHW17, Yan21}. This measures the entropy of $f$ along the foliation $\cF$. If $f$ is uniformly hyperbolic (Anosov) and $\cF$ is the unstable foliation then $h_\mu(f)=h_\mu^\cF(f)$, in other words $\cF$ carries all the entropy of $f$.

	\section{Holonomies}\label{sec.holonomies}
	
	A $C^1$ diffeomorphism $f\colon M\to M$ is \textit{partially hyperbolic} if there exists a nontrivial splitting of the tangent bundle $$TM=E^{u}\oplus E^{c}\oplus E^{s}$$ invariant under the derivative map $Df$, a Riemannian metric $\left\| \cdot \right\|$ on $M$, and positive continuous functions $\vartheta$, $\widehat{\vartheta}$, $\gamma$, $\widehat{\gamma}$ with
	$$\vartheta < 1 < \widehat{\vartheta}^{-1} \quad  \text{and} \quad \vartheta< \gamma < \widehat{\gamma}^{-1}< \widehat{\vartheta}^{-1},$$ such that for any $x\in M$ and any unit vector $v\in T_{x}M$,
	\begin{equation}\label{ph}
		\begin{aligned}
			&\left\| Df_{x}(v) \right\|< \vartheta(x) \quad \quad \text{if} \; v\in E^{s}(x), \\
			\gamma(x) < &\left\| Df_{x}(v) \right\|< \widehat{\gamma}(x)^{-1}\quad  \text{if} \; v\in E^{c}(x),\\
			\widehat{\vartheta}(x)^{-1}< &\left\| Df_{x}(v) \right\| \quad \quad \quad \quad \quad \: \text{if} \; v\in E^{u}(x).
		\end{aligned}
	\end{equation}
	
	The stable and unstable bundles $E^{s}$ and $E^{u}$ are uniquely integrable, and their integral manifolds form two (continuous) foliations $W^{s}$ and $W^{u}$, whose leaves are immersed submanifolds of the same class of differentiability as $f$.
	
	In general, there may not exist an invariant foliation tangent to $E^c$. We say that $f$ is \emph{dynamically coherent} if there also exist invariant foliations $W^{cs}$ and $W^{cu}$ tangent to the bundles $E^c\oplus E^s$ and $E^u\oplus E^c$ respectively: we call these foliations center-stable and center-unstable. The intersection of these foliations defines a center foliation $W^c$. Observe that the center and unstable foliations sub-foliate the center-unstable manifolds.
	
	Let $f\colon M\to M$ be a partially hyperbolic and dynamically coherent diffeomorphism. For $x\in M$, let $B^c_\delta(x)$ be the ball of radius $\delta$ inside $W^c(x)$ centered at the point $x$. Given $y\in W^{cu}(x)$ there exists a H\"older continuous map $h^u_{x,y}\colon B^c_{\delta}(x)\to W^c(y)$, for some $\delta>0$, such that $h^u_{x,y}(z)\in W^u(z)\cap W^c(y)$. These maps are called \emph{unstable holonomies}. Analogously, there exist \emph{stable holonomies} changing the unstable by the stable foliation.
	
	It is well known that when the map is $C^2$, these holonomies are absolutely continuous. This means that for every measurable $E\subset B^c_{\delta}(x)$, $vol^c_x(E)=0$ implies that $vol^c_y(h^u_{x,y}(E))=0$. Here, $vol^c_z$ is the volume measure on $W^c(z)$ induced by the Riemannian structure.
	
	If the functions in Equation $\eqref{ph}$ verify $\vartheta<\gamma \hat{\gamma}$ and $\hat{\vartheta}<\gamma \hat{\gamma}$, the map $f$ is said to be \emph{center bunched}. When $f$ is $C^2$, and center bunched, the unstable and stable holonomies are $C^1$ maps (see \cite{PSW}). If $f$ is $C^{1+\alpha}$, then we need a stronger bunching condition in order to obtain the regularity of the holonomies. We refer the reader to \cite{S} for this case.
	
	In order to simplify the notation, we denote by $\mu_x^c$ the disintegration of $\mu$ along the center foliation $W^c$.
	
	\begin{definition}\label{u-state}
		We say that an $f$-invariant measure $\mu$ has $u$-invariant center disintegration if there exists a $\mu$ total measure set $M'$ such that for every $x,y\in M'$, $y\in W^{cu}(x)$ we have $[{h^u_{x,y}}_*\mu^c_x|B^c_{\delta}(x)]= [\mu^c_y|Im (h^u_{x,y})]$.
		
		In other words, there exists $c>0$ such that, for every measurable $E\subset h^u_{x,y}(B^c_{\delta}(x))$, we have  $\mu^c_x({h^u_{x,y}}^{-1}E)= c\mu^c_y(E)$.
	\end{definition}
	
	We say that a map $f\colon N\times S\to N\times S$ is a \emph{skew product over}, $g\colon N\to N$, if $g\circ \pi=\pi\circ f$ where $\pi\colon N\times S\to N$ is the natural projection. If $f$ is a partially hyperbolic skew product over an Anosov diffeomorphism $g$, then $f$ admits global holonomies. More precisely, for $x,y\in N$ such that $x\in W_g^u(y)$, there exists a family of homeomorphisms $h^u_{x,y}\colon S\to S$ given by,
	\begin{equation}\label{eq.holonomy}
		h^u_{x,y}=\lim_{n\to \infty} (f^{-n}_{y})^{-1} \circ f^{-n}_{x}.
	\end{equation}
	Observe that in this case, $f$ is dynamically coherent and the center leaves are compact. Let $\tilde{\mu}=\pi_* \mu$, then, in this setting, $\mu$ has $u$-invariant center disintegration if there exists a $\tilde{\mu}$ total measure set such that $(h^u_{x, y})_*\mu^c_{x}=\mu^c_{y}$ for every $x,y$ in this set.


	\section{The invariance principle}\label{sec.IP}
	
	Let $f\colon M\to M$ be a $C^1$ partially hyperbolic and dynamically coherent diffeomorphism. The map $f$ is said to act \emph{quasi-isometric} along the center if there exists $\delta_1,\delta_2>0$ such that $f^n(W^c_{\delta_1}(x))\subset W^c_{\delta_2}(f^n(x))$ for $n\in \bZ$.
	
	Partially hyperbolic diffeomorphisms with compact center leaves are quasi-isometric along the center. Another class of partially hyperbolic diffeomorphisms that are quasi-isometric along the center are the so called discretized Anosov flows \cite{Martinchich}. This class contains the perturbation of time one maps of geodesic flows in negative curvature and has non-compact center leaves. Other examples are the time one map of frame flows and isometric extensions of Anosov flows.

	We denote by $h^u_\mu(f)$ the unstable entropy of $f$, that is, the entropy of $f$ along the foliation given by unstable leaves as in Definition \ref{foliation.entropy}.
	
	\begin{theorem}
		Let $f\colon M\to M$ be a $C^1$ partially hyperbolic diffeomorphism which is dynamically coherent and acts quasi-isometric along the center. If $\mu$ is an ergodic invariant measure such that $h^u_\mu(f)=h_\mu(f)$, then the center disintegration of $\mu$ is $u$-invariant.
	\end{theorem}
	
	In what follows, we give the idea of the proof in the particular case when $f$ is a skew product.
	
	Recall that $f\colon N\times S\to N\times S$ and $g\colon N\to N$ is an Anosov diffeomorphism such that $g\circ \pi=\pi\circ f$, where $\pi\colon N\times S\to N$ is the natural projection.
	
	Let $\cW^u_g$ be the unstable foliation of $g$ and $\cW^{u}_f$ be the strong unstable foliation of $f$. It is known that the restriction of $\pi$ to $\cW^{u}_f(x)$ is a diffeomorphism over $\cW^u_g(\pi(x))$ (see, for example~\cite{burns2001recent}).
	
	Let $\eta_g$ be an increasing partition subordinated to $\cW^u_g$. Let $\eta_f$ be the partition given by $\eta_f(x)=\pi^{-1}(\eta_g(\pi(x)))\cap \cW^{u}_f(x)$. This is an increasing partition subordinated to $\cW^{u}_f$.
	
	The proof we are going to explain here was given in \cite[Theorem~A and Corollary~2.2]{tahzibi-yang-IP}, see the article for more details. The proof of the general case can be found in \cite{CP}.
	
	The main idea is to use the definition of unstable entropy and Jensen inequality for convex functions to find an inequality between unstable entropy and the full entropy, then use the fact that the equality in Jensen is only satisfied when the function is constant almost everywhere. This will give a condition on the disintegration that will be translated to $u$-invariance.

	\begin{proof}
		Let $\tilde{\mu}=\pi_* \mu$. Fix $z\in N$ and let $\widetilde{R}=\eta_g(z)$ and $R=\pi^{-1}(\widetilde{R})$. Observe that $g^{-1}(\eta_g)$ gives a countable partition of $\widetilde{R}$. We denote this partition by $\{\widetilde{Q}_i\}_{i\in \mathbb{N}}$ and define $Q_i:=\pi^{-1}(\widetilde{Q}_i)$.
		
		Given $t\in S$, let $\pi_t\colon \eta_f(z,t)\to \widetilde{R}$, be the restriction of $\pi$. This map is a homeomorphism. Write $R$ in the coordinates $\widetilde{R}\times S\ni [x,t]=(x,\pi^{-1}_t(x))$. Observe that in these coordinates $[x,t]$ and $[y,t]$ belong to the same $\cW^u_f$-leaf.
		
		Let $\mu_{R}$ be the measure on $R$ given by the disintegration of $\mu$ with respect to the partition $\pi^{-1}\eta_g$ and let  $\mu^{\eta_f}_t$ be the disintegration of $\mu_{R}$ into horizontals $[\widetilde{R},t]$.
		
		Define $\mu_{\widetilde{R}}=\pi_*\mu_{R}$ and $\nu_{R}$ be the projection of $\mu_{R}$ into $S$ (with coordinates $[\cdot , \cdot]$).
		
		By Jensen's inequality, we know
		$$
		\begin{aligned}
			\int_{S}-\mu^{\eta_f}_t(Q_i)\log \mu^{\eta_f}_t(Q_i)d\nu_{R}&\leq -\mu_{R}(Q_i)\log \mu_{R}(Q_i)\\
			&=-\mu_{\widetilde{R}}(\widetilde{Q}_i)\log \mu_{\widetilde{R}}(\widetilde{Q}_i),
		\end{aligned}
		$$
		with equality if and only if $\mu^{\eta_f}_t(Q_i)$ is constant $\nu_R$-almost everywhere.
		
		Summing on $i$, we get the entropy of the partition given by $f^{-1}\eta_f$ with respect to the partition $\eta_f$, so we have the inequality
		$$
		-\int_{R}\log \mu^{\eta_f}_{x,t}(f^{-1}\eta_f (x,t)) d\mu_{R}(x,t)\leq -\int_{\widetilde{R}}\log \mu_{\widetilde{R}}(g^{-1}\eta_g(x))d\mu_{\widetilde{R}}(x),
		$$
		with equality if and only if $\mu^{\eta_f}_{x,t}(f^{-1}\eta_f (x,t))=\mu_{\widetilde{R}}(g^{-1}\eta_g(x))$ for $\mu_{R}$-almost every $(x,t)$.
		
		Recall that $R$ depends on $z$, so integrating on $z$, we conclude
		$$
		h^u_{\mu}(f)\leq H_{\tilde{\mu}}(g^{-1}\eta_g|\eta_g),
		$$
		with equality if and only if $\mu^{\eta_f}_{x,t}(f^{-1}\eta_f (x,t))=\mu^{\eta_g}_{x}(g^{-1}\eta_g(x))$ for $\mu$-almost every point.
		
		By \cite{LS82,led84}, $h_{\tilde{\mu}}(g)=H_{\tilde{\mu}}(g^{-1}\eta_g|\eta_g)$, therefore we conclude that the equality holds and for $\mu$-almost every point,  thus $\mu^{\eta_f}_{x,t}(f^{-1}\eta_f (x,t))=\mu^{\eta_g}_{x}(g^{-1}\eta_g(x))$.
		
		Using that $h^u_\mu(f^n)=n h^u_\mu(f)$, we conclude that for every $n\in \mathbb{N}$, $$\mu^{\eta_f}_{x,t}(f^{-n}\eta_f (x,t))=\mu^{\eta_g}_{x}(g^{-n}\eta_g(x)).$$  Then, as diameter of $f^{-n}\eta_f(x,t)$ goes to zero, we know
		$\mu^{\eta_f}_{x,t}=\mu^{\eta_g}_{x}$.
		This implies that in the coordinates $[\cdot,\cdot]$, $\mu_{R}=\mu_{\widetilde{R}}\times \nu_R$.
		
		Observe that in coordinates $[\cdot,\cdot]$, the unstable holonomy is the identity on the second coordinate, so the $u$-invariance of the center disintegration is equivalent to $\mu_{R}$ being a product. This concludes the proof.
	\end{proof}
	
	\subsection{Product structure and continuous extension}
	
	The $u$-invariance condition on the disintegration is a measurable condition; it happens in a full measure set, and so normally this is not very useful because measurable functions can have a very complicated behavior and are defined only on a total measure subset. However, when we have both $s$-invariance and $u$-invariance and some extra condition on the measure, we can extend this disintegration to a continuous one on the support of the measure. This gives a very rigid condition that will have many consequences, as we will see in Section~\ref{s.applications}.
	
	\subsubsection*{Skew products over Anosov maps} First, we state and prove the result for the case of skew products, $f\colon N\times S\to N\times S$ over the Anosov map $g\colon N\to N$. Let $\widetilde{N}\subset N$ and $\cM(S)$ be the set of probability measures of $S$. A map $\widetilde{N}\ni x\mapsto \mu^c_{x}\in \cM(S)$ is continuous if it is continuous with respect to the weak$^*$ topology on $\cM(S)$.
	
	Recall that in this setting, an $f$-invariant measure $\mu$ has $su$-invariant center disintegration if there exists a $\tilde{\mu}$ full measure set $N^{s/u}$ such that
	$${h^{s/u}_{x,y}}_* (\mu^c_{x})=\mu^c_{y}\text{ for every } x, y\in N^{s/u}\text{ with }y\in W^{s/u}_g(x).$$
	
	Since $g$ is an Anosov map, for every $x\in N$ there exists $\delta>0$ such that the map
	\[\Phi_g\colon W^s_{\delta}(x)\times W^u_\delta(x)\to N\]
	is a homeomorphism over its image,  where $\Phi_g(y,z)$ is the unique point in $W^u_{loc}(y)\cap W^s_{loc}(z)$.
	
	\begin{definition}
		A $g$-invariant measure $\widetilde{\mu}$ has local product structure if for every $\Phi_g$ as before, $(\Phi_g^{-1})_*\widetilde{\mu}$ is equivalent to a measure of the form $\mu^s\times \mu^u$, with $\mu^{s/u}$ a measure on $W^{s/u}_\delta(x)$ .
	\end{definition}
	
	We remark that since $g$ is an Anosov map, then every equilibrium state associated to a H\"older continuous potential has local product structure.
	
	\begin{proposition}[Theorem 6, \cite{BGV}]\label{prop.product-structure}
		Let $\mu$ be an $f$-invariant measure with $su$-invariant center disintegration and such that $\widetilde{\mu}=\pi_* \mu$ has local product structure. Then, there exists a center disintegration of $\mu$ such that $\supp(\widetilde{\mu})\ni x\mapsto \mu^c_{x}$ is continuous and $su$-invariant.
	\end{proposition}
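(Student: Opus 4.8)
The plan is to build the continuous disintegration by a classical "saturation" argument: start from the measurable $su$-invariant disintegration $x \mapsto \mu^c_x$ guaranteed on the full measure set $N^{s/u}$, and use the local product structure of $\widetilde\mu$ to propagate it to a continuous object on all of $\supp(\widetilde\mu)$. First I would fix a point $x_0 \in \supp(\widetilde\mu)$ and a product chart $\Phi_g : W^s_\delta(x_0) \times W^u_\delta(x_0) \to N$ as in the statement. Because $\widetilde\mu$ has local product structure, $(\Phi_g^{-1})_*\widetilde\mu$ is equivalent to $\mu^s \times \mu^u$, so $\widetilde\mu$-almost every point of the chart is of the form $\Phi_g(y,z)$ with $y$ in a full $\mu^s$-measure subset $A^s \subset W^s_\delta(x_0)$ and $z$ in a full $\mu^u$-measure subset $A^u \subset W^u_\delta(x_0)$; moreover these full-measure sets can be taken inside $N^{s/u}$. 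For such a point $p = \Phi_g(y,z)$ I would \emph{define} $\mu^c_p := {h^u_{x_0,z}}_* \, {h^s_{x_0,y}}_* \, \mu^c_{x_0}$ — using the stable holonomy from $x_0$ to $y$ and then the unstable holonomy to $z$ (here $x_0$ is first replaced by a nearby good base point; I suppress this technicality). The $su$-invariance of the original disintegration shows that on the full-measure set this new assignment agrees with $\mu^c_p$ up to normalization, so it is a genuine disintegration.

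The next step is to check that $p \mapsto \mu^c_p$ so defined extends continuously to the whole chart. The key point is that the stable and unstable holonomies $h^{s/u}_{x,y}$ vary continuously with their endpoints (for a $C^2$, or suitably bunched $C^{1+\alpha}$, skew product the global holonomies in \eqref{eq.holonomy} depend continuously, indeed Hölder-continuously, on $x$ and $y$), and pushing forward a fixed measure under a continuously varying homeomorphism is continuous in the weak${}^*$ topology. Thus for a \emph{fixed} value of $\mu^c_{x_0}$, the formula $\Phi_g(y,z) \mapsto {h^u_{x_0,z}}_*{h^s_{x_0,y}}_*\mu^c_{x_0}$ already defines a continuous map on all of $W^s_\delta(x_0) \times W^u_\delta(x_0)$, which restricted to the full-measure subset coincides with the disintegration. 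This continuous extension is the desired $x \mapsto \mu^c_x$ on the chart; covering $\supp(\widetilde\mu)$ by finitely many such charts and checking that on overlaps the two continuous extensions agree $\widetilde\mu$-a.e.\ (hence everywhere, by continuity and the fact that $\supp(\widetilde\mu)$ has no isolated points in the relevant directions) patches them into a globally defined continuous disintegration.

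Finally I would verify that the continuous version is still $su$-invariant, now on \emph{all} of $\supp(\widetilde\mu)$, not merely on a full-measure set. This is where continuity pays off: the identity ${h^{u}_{x,y}}_*\mu^c_x = \mu^c_y$ holds on a $\widetilde\mu$-full-measure set of pairs $(x,y)$ with $y \in W^u_g(x)$; both sides are continuous in $(x,y)$ along unstable leaves (again by continuity of holonomies and of the disintegration just constructed), and the full-measure set of pairs is dense in the set of all nearby pairs in $\supp(\widetilde\mu) \times \supp(\widetilde\mu)$ with $y \in W^u_g(x)$ — this density is precisely what local product structure buys us, since $\mu^u$ has full support on the unstable direction inside $\supp(\widetilde\mu)$. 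Hence the equality extends by continuity, and symmetrically for stable holonomies.

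The main obstacle I expect is the patching/consistency step: one must be careful that the "good base points" used to define the extension in different charts are compatible, and that the normalization constants (the disintegration is only defined up to a positive scalar, or one fixes $\mu^c_x(S)=1$ since leaves are compact here) do not jump. Concretely, the subtle point is showing that the full-measure set of pairs on which $su$-invariance holds is dense enough in $\supp(\widetilde\mu)$ along \emph{both} stable and unstable directions simultaneously, so that the abstract measurable identity genuinely forces the continuous identity; this is exactly the role of local product structure, and getting the quantifiers right (a single full-measure set $N^{s/u}$ that is simultaneously good for stable and unstable holonomies and meets a.e.\ stable and a.e.\ unstable leaf in a full-measure subset) is the technical heart of the argument.
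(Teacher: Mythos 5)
Your proposal is correct and follows essentially the same strategy as the paper: pick a good base point supplied by the local product structure, propagate its conditional measure through stable and then unstable holonomies to get a continuous candidate disintegration that agrees with the original one almost everywhere, and recover the remaining ($s$-)invariance from density of good pairs plus continuity. The only cosmetic difference is that the paper establishes $s$-invariance by building a second continuous disintegration with the order of holonomies reversed and matching the two on a dense set, whereas you extend the $s$-invariance identity directly by the same density-plus-continuity argument; these are interchangeable.
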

	
	\begin{proof}
		Let $N'=N^s\cap N^u$, where $N^s$ and $N^u$ are the full measure sets in the definition of $su$-invariance. Then, $\widetilde{\mu}(N')=1$.
		
		
		Fix $x\in \supp(\widetilde{\mu})$. By the product structure given by $\Phi_g$, we use the local coordinates $W^s_\delta(x)\times W^u_\delta(x)$. Since $\widetilde{\mu}$ has local product structure, for $\mu^s$-almost every $y\in W^s_\delta(x)$, we have that $\mu^u((\{y\}\times W^u_\delta(x))\setminus N')=0$. Let $(y_0,z_0)\in (\{y_0\}\times W^u_\delta(x))\cap N'$.
		
		For $y\in W^s_\delta(x)$, define $$\nu_{(y,z_0)}={h^s_{(y_0,z_0),(y,z_0)}}_* \mu^c_{(x_0,y_0)}.$$ Observe that if $(y,z_0)\in N'$ then $\nu_{(y,z_0)}=\mu^c_{(y,z_0)}$. Now for $z\in W^u_\delta(x)$, define $$\nu_{(y,z)}={h^u_{(y,z_0),(y,z)}}_* \nu_{(y,z_0)}.$$ Therefore, if $(y,z_0)\in N'$ and $(y,z)\in N'$, then $\nu_{(y,z)}=\mu^c_{(y,z)}$. Observe that this set has full measure inside $W^s_\delta(x)\times W^u_\delta(x)$, therefore equality holds almost everywhere.
		
		Moreover, by the uniform continuity of the holonomies in $W^s_\delta(x)\times W^u_\delta(x)$, the disintegration $(y, z)\mapsto \nu_{(y, z)}$ is uniformly continuous and $u$-invariant. Exchanging the roles of the stable and the unstable manifolds, we can find a disintegration $(y, z)\mapsto \nu'_{(y, z)}$ which is continuous and $s$-invariant. Since both disintegrations coincide in a dense set of $\supp(\widetilde{\mu})\cap W^s_\delta(x)\times W^u_\delta(x)$, then continuity implies that they coincide on the whole set.
		
		We can repeat this argument for every $x\in \supp(\mu)$ and since any such disintegrations will coincide in the intersection, we can define a center disintegration of $\mu$ satisfying the desired properties.
	\end{proof}
	
	As a consequence, we get the following result.
	\begin{corollary}\label{th.product-structure}
		Let $f$ be a partially hyperbolic skew product. Suppose $\mu$ is an $f$-invariant ergodic measure such that $h^u_\mu(f)=h^s_\mu(f)=h_\mu(f)$ and $\widetilde{\mu}=\pi_* \mu$ has local product structure. Then, there exists a center disintegration of $\mu$ such that $\supp(\widetilde{\mu})\ni x\mapsto \mu^c_{x}$ is continuous and $su$-invariant.
	\end{corollary}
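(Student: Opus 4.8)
The plan is to deduce Corollary~\ref{th.product-structure} by combining the skew-product Invariance Principle discussed above with Proposition~\ref{prop.product-structure}. First I would invoke the Invariance Principle (Theorem~\ref{main}, or rather the skew-product argument given in its proof): since $\mu$ is ergodic and $h^u_\mu(f)=h_\mu(f)$, the center disintegration of $\mu$ is $u$-invariant. Then I would run the symmetric argument, interchanging the roles of $W^u$ and $W^s$ (equivalently, applying the same statement to $f^{-1}$, noting that $h^s_\mu(f)=h^u_\mu(f^{-1})$ and $h_\mu(f)=h_\mu(f^{-1})$), to obtain from the hypothesis $h^s_\mu(f)=h_\mu(f)$ that the center disintegration is also $s$-invariant. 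Thus $\mu$ has $su$-invariant center disintegration in the sense defined before Proposition~\ref{prop.product-structure}.

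Next I would check the remaining hypothesis of Proposition~\ref{prop.product-structure}, namely that $\widetilde{\mu}=\pi_*\mu$ has local product structure — but this is assumed directly in the statement of the corollary, so there is nothing to prove. Having verified both hypotheses, I would apply Proposition~\ref{prop.product-structure} verbatim: it yields a center disintegration of $\mu$ such that $\supp(\widetilde{\mu})\ni x\mapsto \mu^c_x$ is continuous and $su$-invariant, which is exactly the conclusion of the corollary.

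The one subtlety worth spelling out is the reduction of $s$-invariance to the $u$-invariance statement via the inverse map. I would point out that $f^{-1}$ is again a partially hyperbolic skew product over the Anosov map $g^{-1}$, that its strong unstable foliation is the strong stable foliation of $f$, that its center foliation and center disintegration coincide with those of $f$, and that entropy is invariant under taking inverses, so that $h^{u}_\mu(f^{-1})=h^{s}_\mu(f)=h_\mu(f)=h_\mu(f^{-1})$; the skew-product Invariance Principle applied to $f^{-1}$ then gives precisely $s$-invariance of $\mu^c_x$. With both invariances in hand the rest is a direct citation.

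The main obstacle is not really a mathematical one here — the heavy lifting has already been done in the Invariance Principle and in Proposition~\ref{prop.product-structure}. The only place requiring a little care is making sure the two full-measure sets (for $s$- and for $u$-invariance) can be intersected, which is immediate, and that the ``exchanging the roles'' step in the proof of Proposition~\ref{prop.product-structure} is compatible with the way we produced the $s$-invariant disintegration; since Proposition~\ref{prop.product-structure} already takes as input an $su$-invariant disintegration and internally performs this exchange, no additional argument is needed.
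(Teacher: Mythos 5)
Your proposal is correct and follows exactly the route the paper intends: the paper states the corollary as an immediate consequence of the skew-product Invariance Principle (applied to $f$ for $u$-invariance and, implicitly, to $f^{-1}$ for $s$-invariance) combined with Proposition~\ref{prop.product-structure}. Your spelling out of the $f^{-1}$ reduction and the intersection of the two full-measure sets is a faithful filling-in of the details the paper leaves implicit.
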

	
	\subsubsection*{Partially hyperbolic maps} Let $f\colon M\to M$ be a $C^1$ partially hyperbolic diffeomorphism which is dynamically coherent and acts quasi-isometric along the center.
	
	Since $f$ is a partially hyperbolic map, for every $x\in M$ there exists $\delta>0$ such that the map $\Phi_f\colon W^{cs}_{\delta}(x)\times W^u_\delta(x)\to M$, where $\Phi_f(y,z)$ is the unique point in $W^u_{loc}(y)\cap W^{cs}_{loc}(z)$, is a homeomorphism over its image.
	
	\begin{definition}
		An $f$-invariant measure $\mu$ has local $cs\times u$ product structure if for every $\Phi_f$ as before $(\Phi_f^{-1})_*\mu$ is equivalent to a measure of the form $\mu^{cs}\times \mu^u$, where $\mu^{cs/u}$ is a measure on~$W^{cs/u}_\delta(x)$.
	\end{definition}
	
	The local $cs\times u$ product structure is satisfied by classes of measures as equilibrium states and some $u$-Gibbs, see \cite{CP}.
	
	The following corollary is an extension of Corollary \ref{th.product-structure} to the general setting. The proof is a consequence of an analogous version of Proposition \ref{prop.product-structure}. See Section 6 of \cite{CP}.
	
	\begin{corollary}
		Let $\mu$ be an $f$-invariant ergodic measure such that $h^u_\mu(f)=h^s_\mu(f)=h_\mu(f)$ and $\mu$ has local $cs\times u$ product structure. Then, there exists a center disintegration of $\mu$ such that $\supp(\mu)\ni x\mapsto \mu^c_{x}$ is continuous and $su$-invariant.
	\end{corollary}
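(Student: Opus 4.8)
The plan is to reduce the statement to Corollary~\ref{th.product-structure} via the standard device of quotienting by the center-stable/center-unstable foliations, exactly as the skew-product argument reduces the general partially hyperbolic case to an ``Anosov base''. Concretely, the first step is to observe that the Invariance Principle (Theorem~\ref{main}, applied to both $f$ and $f^{-1}$, using $h^u_\mu(f)=h^s_\mu(f)=h_\mu(f)$) already gives that the center disintegration $x\mapsto[\mu^c_x]$ is simultaneously $u$-invariant and $s$-invariant on a full-measure set $M'$. So the only new content of the corollary is the passage from a \emph{measurable} $su$-invariant disintegration to a \emph{continuous} one defined on all of $\supp(\mu)$, and for that we want to mimic the proof of Proposition~\ref{prop.product-structure} line by line, replacing $W^s_\delta(x)\times W^u_\delta(x)$ by the local product chart $\Phi_f:W^{cs}_\delta(x)\times W^u_\delta(x)\to M$.

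Second, I would fix $x\in\supp(\mu)$ and work in the chart $\Phi_f$. Because $\mu$ has local $cs\times u$ product structure, $(\Phi_f^{-1})_*\mu$ is equivalent to $\mu^{cs}\times\mu^u$; hence Fubini gives that for $\mu^{cs}$-a.e. slice $\{y\}\times W^u_\delta(x)$ the set $N'\cap(\{y\}\times W^u_\delta(x))$ has full $\mu^u$-measure, where $N'=M'\cap\Phi_f(\cdot)$ is the full-measure $su$-invariant set. Pick a base slice through a point $(y_0,z_0)\in N'$. Along the $W^u$-direction inside this chart the unstable holonomies $h^u$ connect the points $(y,z_0)\mapsto(y,z)$, and along $W^{cs}$ we cannot use a single holonomy, but we do not need to: the center-stable leaf is itself where the center disintegration lives and is $s$-invariant; what we actually propagate is the conditional measure on the center leaf $W^c((y,z))=W^{cs}((y,z))\cap W^{cu}((y,z))$. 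So I would define $\nu_{(y,z)}$ by first transporting $\mu^c_{(y_0,z_0)}$ within the center-stable leaf by $s$-holonomy to get $\nu_{(y,z_0)}$ on $W^c((y,z_0))$ (valid for $\mu^{cs}$-a.e. $y$ by $s$-invariance), then transporting along the unstable holonomy $h^u_{(y,z_0),(y,z)}$ to get $\nu_{(y,z)}$ on $W^c((y,z))$. By $su$-invariance $\nu_{(y,z)}=\mu^c_{(y,z)}$ on a full-measure subset of the chart, so $\nu$ is a genuine version of the disintegration; by uniform continuity of the (stable and unstable) holonomies in compact pieces and the quasi-isometric action along the center, $(y,z)\mapsto\nu_{(y,z)}$ is continuous and $u$-invariant. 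Running the symmetric construction with the roles of $s$ and $u$ exchanged (using the $cu\times s$ chart) yields a continuous $s$-invariant version $\nu'$, and since $\nu$ and $\nu'$ agree on the dense set $\supp(\mu)\cap(\text{chart})\cap N'$, continuity forces $\nu\equiv\nu'$, so this common version is continuous and $su$-invariant in the chart.

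Third, I would patch the local versions: any two such charts produce disintegrations agreeing $\mu$-a.e. on the overlap, hence (both being continuous on the support) agreeing everywhere on the overlap of the supports, so they glue to a globally defined continuous $su$-invariant center disintegration $\supp(\mu)\ni x\mapsto\mu^c_x$. This is precisely the content of ``an analogous version of Proposition~\ref{prop.product-structure}'' referenced for the general setting, and citing Section~6 of \cite{CP} is legitimate for the technical verifications.

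The main obstacle is the center direction inside the center-stable chart. In the skew-product case the center leaves are compact and the base is honestly Anosov, so the holonomies are global homeomorphisms of $S$ and there is a canonical normalization $\mu^c_x(W^c(x))=1$; in the general partially hyperbolic setting the center leaves may be non-compact (e.g. discretized Anosov flows), holonomies are only locally defined Hölder maps, and we only have projective classes $[\mu^c_x]$, so ``transport'' must be done carefully within $W^c_{\delta_2}$-sized pieces and one must check that the choice of base point $(y_0,z_0)$ and of local representatives does not affect the resulting continuous section up to the admissible normalization. This is exactly where the quasi-isometric hypothesis enters — it guarantees that a definite-size center plaque maps to a definite-size center plaque under all iterates, so the holonomy maps stay within controlled domains and the continuity estimates are uniform — and it is the step that genuinely needs the machinery developed in \cite{CP} rather than a one-line adaptation of the skew-product proof.
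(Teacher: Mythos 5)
Your proposal is correct and follows essentially the same route as the paper: the entropy hypotheses give measurable $su$-invariance via the Invariance Principle applied to $f$ and $f^{-1}$, and the upgrade to a continuous disintegration on $\supp(\mu)$ is the analogue of Proposition~\ref{prop.product-structure} run in the $W^{cs}_\delta(x)\times W^u_\delta(x)$ chart, which is exactly what the paper defers to Section~6 of \cite{CP}. Your handling of the $cs$-direction (propagating the center conditional by the stable holonomies $h^s_{x,y}$ for $y\in W^{cs}(x)$, and flagging the normalization/locality issues for non-compact center leaves) matches the intended argument.
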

	
	The concepts behind the Invariance Principle trace back to the work of Furstenberg for random products of matrices \cite{Fur} and the measurable generalization of Ledrappier \cite{ledrappierIP}. In the following sections, we present different settings where the various versions of the Invariance Principle can be applied.

	\section{Random dynamics and Furstenberg entropy}\label{sec.Furstenberg}
	
	As before, let $M$ be a compact Riemannian manifold, and for $r\geq 1$, let $\operatorname{Diff}^r(M)$ be the space of $C^r$ diffeomorphisms of $M$.
	We consider a probability measure $\nu$ in $\operatorname{Diff}^r(M)$ with compact support and denote $\Sigma=\supp(\nu)^\mathbb{N}$. We further consider $\sigma\colon \Sigma\to \Sigma$ the shift map given by $\sigma(f_0,f_1,\dots)=(f_1,f_2,\dots)$.
	
	The random dynamical system associated to $\nu$ is defined as the skew product, $$
	F\colon \Sigma\times M\to \Sigma\times M\quad (\omega,x)=(\sigma(\omega),f_0(x)),
	$$ where $\omega=(f_0,f_1,\dots)$.

	A measure $\eta$ in $M$ is called $\nu$-stationary if
	\begin{equation}
		\eta=\int f_*\eta d\nu(f).
	\end{equation}
	Observe that $\eta$ being $\nu$-stationary is equivalent to $\nu^\bN \times \eta $ being $F$-invariant. We call $\nu$ the driving measure.
	
	For each $f\in \supp(\nu)$, define $Jf=\dfrac{d (f^{-1}_*\eta)}{d\eta}$. In other words, $f^{-1}_*\eta=Jf \eta +\eta_f$, where $\eta_f$ is singular with respect to $\nu$.
	
	\begin{definition}[see \cite{Fur}] The Furstenberg entropy is defined as
		$$
		\hFur(\nu,\eta)=-\int \int \log Jf(x) d\eta(x)d\nu(f).
		$$
	\end{definition}

	\begin{lemma}
		$\hFur(\nu,\eta)\geq 0$ with equality if, and only if, $f_*\eta=\eta$ for $\nu$-almost every $f$. Moreover, $\hFur(\nu,\eta)<\infty$ implies that $f_*\eta \ll \eta$ for $\nu$-almost every $f$, that is, $f_*\eta$ is absolutely continuous with respect to $\eta$.
	\end{lemma}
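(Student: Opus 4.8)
The plan is to exploit the concavity of the logarithm and Jensen's inequality. First I would compute, for a fixed $f \in \supp(\nu)$, the quantity $\int -\log Jf(x)\, d\eta(x)$. Since $Jf = \frac{d(f^{-1}_*\eta)}{d\eta}$, write $f^{-1}_*\eta = Jf\,\eta + \eta_f$ with $\eta_f \geq 0$ singular to $\eta$; in particular $\int Jf\, d\eta = (f^{-1}_*\eta)(M) - \eta_f(M) = 1 - \eta_f(M) \leq 1$. Applying Jensen's inequality to the concave function $-\log$ and the probability measure $\eta$ gives
\begin{equation*}
\int -\log Jf(x)\, d\eta(x) \;\geq\; -\log\!\left(\int Jf(x)\, d\eta(x)\right) \;=\; -\log\!\bigl(1 - \eta_f(M)\bigr) \;\geq\; 0,
\end{equation*}
where the final inequality uses $0 \leq \eta_f(M) \leq 1$. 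Integrating over $f$ against $d\nu(f)$ preserves the inequality, so $\hFur(\nu,\eta) \geq 0$.

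Next I would analyze the equality case. Equality $\hFur(\nu,\eta)=0$ forces, for $\nu$-almost every $f$, both inequalities above to be equalities. Equality in Jensen for the strictly concave function $-\log$ forces $Jf$ to be $\eta$-almost everywhere constant, and equality in the second step forces $\eta_f(M) = 0$, hence that constant value of $Jf$ equals $1$. Therefore $f^{-1}_*\eta = Jf\,\eta + \eta_f = \eta$ for $\nu$-almost every $f$, which is equivalent to $f_*\eta = \eta$ for $\nu$-almost every $f$. Conversely, if $f_*\eta = \eta$ for $\nu$-a.e.\ $f$, then $Jf \equiv 1$ a.e.\ and the integrand vanishes, giving $\hFur(\nu,\eta)=0$. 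This establishes the equivalence.

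For the finiteness statement: suppose $\hFur(\nu,\eta) < \infty$. I must show $f_*\eta \ll \eta$, equivalently $\eta_f(M)=0$, for $\nu$-almost every $f$. The point is that $-\log Jf(x)$, being $\geq -\log(1-\eta_f(M))$ in an averaged sense, contributes a definite positive amount whenever $\eta_f(M)>0$. More precisely, from the Jensen bound above, for each $f$ we have $\int -\log Jf\, d\eta \geq -\log(1-\eta_f(M))$; integrating, $\infty > \hFur(\nu,\eta) \geq \int -\log(1-\eta_f(M))\, d\nu(f) \geq 0$, where the integrand is $\geq 0$ and equals $+\infty$ precisely when $\eta_f(M)=1$. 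Finiteness of the integral then forces $\eta_f(M) < 1$ for $\nu$-a.e.\ $f$, but this alone is not quite $\eta_f(M)=0$. To upgrade, I would use that $F$-invariance of $\nu^{\bN}\times\eta$ together with the cocycle structure gives control on the iterated Jacobians; alternatively, one shows directly that if $\eta_f(M)>0$ on a positive-$\nu$-measure set of $f$, the contribution to $\hFur$ from the singular part, combined with the random iteration, diverges. The cleanest route is: absolute continuity $f_*\eta \ll \eta$ is equivalent to $\frac{d(f^{-1}_*\eta)}{d\eta}$ being genuinely the full density, i.e.\ $\int Jf\,d\eta = 1$; and since $x\mapsto \log Jf(x)$ is $\eta$-integrable for $\nu$-a.e.\ $f$ (as its negative-part integral is finite by the above and it is bounded above after symmetrizing via $f^{-1}$), one deduces $Jf>0$ $\eta$-a.e., from which $\eta_f$, being concentrated where $Jf$ fails to account for $f^{-1}_*\eta$, must vanish.

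The main obstacle I anticipate is precisely this last upgrade from $\eta_f(M)<1$ (a.e.\ $f$) to $\eta_f(M)=0$ (a.e.\ $f$): the one-step Jensen estimate only rules out total singularity, and one genuinely needs to invoke either the invariance of $\nu^{\bN}\times\eta$ under $F$ (to bring in the full orbit and a Birkhoff-type argument showing a persistent singular part would make $\hFur$ infinite) or a sharper bound on $\int -\log Jf\, d\eta$ in terms of $\eta_f(M)$ that is not merely $-\log(1-\eta_f(M))$. I would handle it by the stationarity identity: $\eta = \int f_*\eta\, d\nu(f)$ shows $\eta$ dominates $f_*\eta$ "on average," and combined with $\hFur<\infty$ controlling the logarithmic divergence of the densities, standard arguments (as in Furstenberg--Ledrappier) yield $f_*\eta\ll\eta$ for $\nu$-a.e.\ $f$.
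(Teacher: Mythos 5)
Your treatment of non-negativity and of the equality case is correct and is the Jensen argument the paper has in mind: $\int Jf\,d\eta = 1-\eta_f(M)\le 1$, so $\int -\log Jf\,d\eta \ge -\log(1-\eta_f(M))\ge 0$, and equality forces $Jf$ constant $\eta$-a.e.\ together with $\eta_f(M)=0$, hence $f^{-1}_*\eta=\eta$.

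The third assertion is where the proposal goes off track, and the problem is not the missing ``upgrade'' you worry about but a misidentification of the target. The claim $f_*\eta\ll\eta$ is equivalent (as $f$ is invertible) to $\eta\ll f^{-1}_*\eta$, which is in turn equivalent to $Jf>0$ $\eta$-almost everywhere: if $f^{-1}_*\eta(E)=0$ then $\int_E Jf\,d\eta=0$, and positivity of $Jf$ forces $\eta(E)=0$. It is \emph{not} the statement $\eta_f=0$; that would be $f^{-1}_*\eta\ll\eta$, i.e.\ $\eta\ll f_*\eta$, the opposite domination, and it genuinely does not follow from finiteness of the entropy. For instance, if $\eta$ is Lebesgue on $[0,1]$ and $f^{-1}_*\eta=\tfrac12\,\mathrm{Leb}+\tfrac12\delta_0$, then $Jf\equiv\tfrac12>0$ and $\int-\log Jf\,d\eta<\infty$, yet $\eta_f\neq 0$. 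So the obstacle you single out --- passing from $\eta_f(M)<1$ to $\eta_f(M)=0$ --- obstructs a statement the lemma does not make, and your closing inference ``$Jf>0$ $\eta$-a.e., from which $\eta_f\dots$ must vanish'' is false: $\eta_f$ is carried by an $\eta$-null set, so positivity of $Jf$ says nothing about it.

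With the target stated correctly the proof is short and is the paper's: the negative part of $-\log Jf$ is $\log^+ Jf\le Jf$, hence $\eta$-integrable, so $\int-\log Jf\,d\eta$ is well defined in $[0,+\infty]$; since each such inner integral is $\ge 0$, finiteness of $h^{\operatorname{Furs}}(\nu,\eta)$ gives $\int-\log Jf\,d\eta<\infty$ for $\nu$-a.e.\ $f$, which forces $\eta(\{Jf=0\})=0$ because $-\log Jf=+\infty$ on that set. Thus $Jf>0$ $\eta$-a.e., giving $\eta\ll f^{-1}_*\eta$ and therefore $f_*\eta\ll\eta$. You in fact derive ``$Jf>0$ $\eta$-a.e.'' in your last paragraph but then draw the wrong conclusion from it; no invariance of $\nu^{\mathbb{N}}\times\eta$, Birkhoff-type argument, or iteration is needed.
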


	\begin{proof}
		The first part is a direct consequence of the Jensen inequality.
		
		For the second part, take $A\subset \supp(\nu)\times M$ such that $Jf(x)=0$. This set has zero $\nu\times \eta$ measure. This implies that for $\nu$-almost every $f$, $Jf(x)>0$ for $\eta$-almost every $x\in M$. In particular,  we conclude that $\eta\ll f^{-1}_*\eta$ which finish the proof of the lemma.
	\end{proof}
	
	Given $\omega =(f_0,f_1,\dots)\in \Sigma$, we denote $f_\omega^n=f_{n-1}\circ \cdots \circ f_0$. If $\log\|Df\|\in L^1(\nu)$, then we can define the Lyapunov exponents of $F$ as the limits
	$$\lim_{n\to \infty}\frac{1}{n}\log \|Df^n_\omega(x)v\|,\quad v\in T_x M.$$
	By Oseledets Theorem \cite{Ose68}, the limit is well defined for $\nu^\mathbb{N}\times \eta$-almost every $(\omega,x)$. Let $\lambda_{min}(\omega,x)$ be the smallest Lyapunov exponent on $(\omega,x)$.

	The following result gives an inequality between the Lyapunov exponents and the Furstenberg entropy. It was proved by Ledrappier for projective linear cocycles \cite{ledrappierIP}, for $C^1$ random walks by Crauel \cite{crauel1}, and for skew products by Avila-Viana \cite{AV-IP}.
	
	\begin{proposition}
		Suppose that $\log\|Df\|\in L^1(\nu)$, then
		$$
		\hFur(\nu,\eta)\leq \dim(M)\int \min\{-\lambda_{min}(\omega,x),0\} d\eta(x)d\nu^\bN(\omega)
		$$
	\end{proposition}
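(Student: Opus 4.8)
The plan is to bound the Furstenberg entropy by comparing the Radon--Nikodym cocycle $Jf$ with the contraction along the weakest Oseledets subspace. First I would reduce to the case $\hFur(\nu,\eta)<\infty$, since otherwise the lemma already forces $f_*\eta\ll\eta$ for $\nu$-a.e.\ $f$ (hence $Jf>0$ a.e.), and in fact the right-hand side being infinite makes the inequality trivial; conversely if the right side is finite we may work with $Jf>0$. The key observation is that $-\log Jf(x)$ is the logarithmic rate at which $f^{-1}$ contracts $\eta$-mass near $x$; iterating, $\frac1n\log J f_\omega^n(x)\to \int\log Jf\,d\eta\,d\nu$ by the Birkhoff ergodic theorem applied to the $F$-invariant measure $\nu^\bN\times\eta$ (after passing to the natural extension $\nu^\bZ\times\eta$), and this limit equals $-\hFur(\nu,\eta)$.

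Next I would set up the ``ball estimate'': for a small ball $B_r(x)$, compare $\eta(B_r(x))$ with $\eta(f_\omega^n(B_r(x)))=(f_\omega^n)_*\eta(\,\cdot\,)$ pulled back, using $J f_\omega^n$. On one hand, $(f^{-n}_\omega)_*\eta = Jf_\omega^n\cdot\eta+(\text{singular})$, so $\int_{B} Jf_\omega^n\,d\eta\le \eta(f_\omega^n B)$ wherever $f^n_\omega$ restricted to $B$ is injective (which holds for $B$ small and $n$ bounded relative to the scale, or one argues along a subsequence). On the other hand, the image $f_\omega^n(B_r(x))$ is, up to subexponential factors, contained in an ellipsoid whose smallest semiaxis contracts at rate $\lambda_{\min}$ and whose other semiaxes are controlled; hence its $\eta$-measure is at most $\eta$ of a ball of the appropriate radius. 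Taking $\eta$-dimension estimates (or, more robustly, working with the entropy/dimension inequality in the style of Ledrappier--Young and the $\delta$-entropy $H_\eta(\mathcal{P}_\delta)$ of a partition into sets of diameter $\delta$), one gets
\[
-\hFur(\nu,\eta)=\lim_{n}\frac1n\log Jf_\omega^n(x)\ \ge\ \sum_{i}\min\{\lambda_i,0\}\cdot(\text{multiplicity}),
\]
and bounding each negative exponent crudely by $\lambda_{\min}$ with total multiplicity $\dim(M)$ yields
\[
-\hFur(\nu,\eta)\ \ge\ \dim(M)\,\min\{\lambda_{\min}(\omega,x),0\},
\]
which after integrating in $(\omega,x)$ is exactly the claimed inequality.

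A cleaner route, which I would actually prefer to write, avoids geometry of ellipsoids: use that for $\nu$-a.e.\ $f$ with $f_*\eta\ll\eta$ we can disintegrate and invoke the Margulis--Ruelle-type inequality $\hFur(\nu,\eta)\le -\int\sum_i \min\{\lambda_i,0\}\,d\eta\,d\nu$ — the Furstenberg-entropy analogue of the Ruelle inequality for stationary measures — and then bound each $\lambda_i\ge\lambda_{\min}$ so that $-\sum_i\min\{\lambda_i,0\}\le -\dim(M)\min\{\lambda_{\min},0\}$. Proving that Ruelle-type inequality is the substantive content: one covers $M$ by $\epsilon$-balls, pushes forward by $f_\omega^n$, and estimates the entropy of the pushed-forward partition using the volume/measure distortion governed by the Lyapunov spectrum, together with $\hFur(\nu,\eta)=\lim_n\frac1n H$ of the relevant refinement.

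The main obstacle is precisely this last estimate: controlling the $\eta$-measure of images of small balls purely in terms of Lyapunov exponents when $\eta$ is an arbitrary stationary measure (not volume), since $\eta$ need not have any a priori regularity. The standard fix, which I would follow (citing \cite{ledrappierIP} and \cite{AV-IP}), is to replace ``measure of a ball'' by the conditional-entropy formulation: take an increasing partition, use that $\hFur$ equals the entropy jump $H_{\nu\times\eta}(\text{refinement})$, and apply the Brin--Katok / Shannon--McMillan--Breiman framework so that only the exponential growth rate of preimages matters, which is controlled by $\min\{-\lambda_{\min},0\}$ per dimension. Subexponential error terms (distortion of $f$, non-injectivity on large balls, the singular part $\eta_f$) are absorbed in the $\frac1n\log$ limit, and ergodicity of $\nu^\bZ\times\eta$ — or an ergodic decomposition — lets us integrate the pointwise bound to the stated integral inequality.
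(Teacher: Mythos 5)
The paper offers no proof of this proposition: it is stated as a quoted result of Ledrappier, Crauel and Avila--Viana, so your sketch can only be measured against the standard argument in those references. Your overall architecture does match it: additivity plus Birkhoff reduces everything to estimating $\frac1n\log Jf^n_\omega(x)$; the geometric input is the action of $f^n_\omega$ on small balls as dictated by the Lyapunov spectrum; and the factor $\dim(M)$ must enter through the cardinality (hence the entropy) of partitions of $M$ at scale $\delta$, because an arbitrary stationary measure has no pointwise regularity. You correctly identify that last point as the crux and that the conditional-entropy formulation is what rescues the argument.

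There are, however, concrete gaps. First, your ball estimate is oriented the wrong way: since $h^{\operatorname{Furs}}(\nu,\eta)=-\int\log Jf\,d\eta\,d\nu$ and $Jf^n_\omega(x)\approx \eta\bigl(f^n_\omega(B_r(x))\bigr)/\eta\bigl(B_r(x)\bigr)$, an \emph{upper} bound on the entropy requires a \emph{lower} bound on $\eta\bigl(f^n_\omega(B_r(x))\bigr)$; the relevant fact is that the image \emph{contains} a ball of radius comparable to $e^{n\lambda_{\min}}r$ (the co-norm of $Df^n_\omega$), whereas the containment in an ellipsoid that you invoke bounds the image's measure from above and would yield a lower bound on $h^{\operatorname{Furs}}$ — the opposite inequality. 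Second, your ``cleaner route'' is circular: the ``Margulis--Ruelle-type inequality for stationary measures'' you propose to cite is precisely (a sharper form of) the proposition to be proved. Third, the genuinely hard step — converting the comparison of $\eta$-masses at scales $e^{n\lambda_{\min}}r$ versus $r$ into the bound $n\dim(M)\max\{-\lambda_{\min},0\}+o(n)$ uniformly enough to integrate, for a measure with no a priori dimension estimates — is only gestured at, and this is exactly where the content of \cite{ledrappierIP} and \cite{AV-IP} lies. Finally, what your displayed inequalities actually give is $h^{\operatorname{Furs}}(\nu,\eta)\le \dim(M)\int\max\{-\lambda_{\min},0\}\,d\eta\,d\nu^{\mathbb N}$, not the literal statement with $\min\{-\lambda_{\min},0\}$; as printed the right-hand side is nonpositive and, together with $h^{\operatorname{Furs}}\ge 0$, would force the entropy to vanish identically, which is false. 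The printed statement must therefore contain a typo, and you should flag this explicitly rather than silently proving a different (correct) inequality.
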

	
	As a consequence, we get the following version of the Invariance Principle in the setting of random dynamical systems.
	\begin{theorem}
		For the random dynamical system with driving measure $\nu$ and stationary measure $\eta$, if all Lyapunov exponents are non-negative $\nu^{\bN}\times \eta$-almost everywhere, then $\eta$ is invariant for $\nu$-almost every $f$.
	\end{theorem}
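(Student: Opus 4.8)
The plan is to sandwich the Furstenberg entropy between the lower bound coming from the Lemma (together with its equality case) and the upper bound coming from the preceding Proposition. First I would observe that the hypotheses of the Proposition are met: since $\nu$ has compact support in $\operatorname{Diff}^r(M)$, the functions $\log\|Df\|$ and $\log\|Df^{-1}\|$ are bounded on $\supp(\nu)$, hence lie in $L^1(\nu)$. In particular the Proposition applies, and by Oseledets' theorem the Lyapunov exponents, in particular $\lambda_{min}(\omega,x)$, are well defined and integrable with respect to $\nu^{\bN}\times\eta$.

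Next I would feed the non-negativity hypothesis into the bound of the Proposition. By assumption $\lambda_{min}(\omega,x)\ge 0$ for $\nu^{\bN}\times\eta$-almost every $(\omega,x)$, so $-\lambda_{min}(\omega,x)\le 0$ and therefore $\min\{-\lambda_{min}(\omega,x),0\}=-\lambda_{min}(\omega,x)$ almost everywhere. Consequently
$$
\dim(M)\int \min\{-\lambda_{min}(\omega,x),0\}\,d\eta(x)\,d\nu^{\bN}(\omega)=-\dim(M)\int \lambda_{min}(\omega,x)\,d\eta(x)\,d\nu^{\bN}(\omega)\le 0,
$$
and the Proposition gives $\hFur(\nu,\eta)\le 0$. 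On the other hand, the first assertion of the Lemma gives $\hFur(\nu,\eta)\ge 0$. Combining the two inequalities yields $\hFur(\nu,\eta)=0$.

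Finally I would invoke the equality case of the Lemma: $\hFur(\nu,\eta)=0$ forces $f_*\eta=\eta$ for $\nu$-almost every $f$, which is precisely the assertion that $\eta$ is invariant for $\nu$-almost every $f$. There is essentially no obstacle here, the proof being a short squeeze argument; the only points that require a line of care are checking the $L^1$ integrability condition so that the Proposition and Oseledets' theorem legitimately apply, and the elementary sign computation identifying $\min\{-\lambda_{min},0\}$ with $-\lambda_{min}$ under the non-negativity hypothesis.
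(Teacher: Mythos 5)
Your squeeze argument is exactly the intended derivation: the paper presents this theorem as an immediate consequence of the preceding Proposition and Lemma, and your proof supplies precisely those steps (integrability from compact support, the upper bound becoming nonpositive under the hypothesis, and the equality case of Jensen giving $f_*\eta=\eta$ for $\nu$-almost every $f$). One minor remark: as literally printed the Proposition's bound involves $\min\{-\lambda_{min},0\}$, which is nonpositive regardless of the sign of $\lambda_{min}$ (it is presumably a typo for $\max\{-\lambda_{min},0\}$), but your conclusion $h^{\operatorname{Furs}}(\nu,\eta)\le 0$ holds under the non-negativity hypothesis with either reading, so the argument is sound.
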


	Malicet proved a version of this theorem for random walks of homeomorphisms in the circle, and he used this result to study synchronization \cite{malicet17}.

	\section{Cocycles}\label{sec.cocycles}
	\subsection{Linear cocycles} In the following, we consider $f\colon M\to M$ as an Anosov or a partially hyperbolic diffeomorphism and $A\colon M\to \GL$ an $\alpha$-H\"older continuous map. The pair $(f,A)$ is called a linear cocycle and defines the skew product, $$
	F_A\colon M\times \mathbb{R}^d\to M\times \mathbb{R}^d,\quad (x,v)\mapsto (f(x),A(x)v).
	$$
	The Lyapunov exponents of $(f,A,\mu)$ are defined as the limits, $$
	\lim_{n\to \infty} \frac{1}{n}\log \|A^n(x)v\|.
	$$
	As before, by Oseledets Theorem, if $\mu$ is an $f$-invariant measure, then the limit exists for \mbox{$\mu$-almost} every $x$.
	
	Moreover, if $\mu$ is ergodic, there exist numbers $\lambda_1(A)> \cdots >\lambda_k(A)$, with $k\leq d$, and a measurable decomposition $E_1(x)\oplus \cdots \oplus E_k(x)$, defined $\mu$-almost everywhere, such that $$
	\lambda_i(A)=\lim_{n\to \infty} \frac{1}{n}\log \|A^n(x)v\|,\quad \text{if }v\in E_i(x),\,1\leq i\leq k.$$
	
	
	Denote by $\zeta$ the rate of contraction and expansion if $f$ is Anosov and $\zeta=\max\{\vartheta, \widehat{\vartheta}\}$ if $f$ is partially hyperbolic. Here $\vartheta$ and $\widehat{\vartheta}$ are the functions in Equation (\ref{ph}).
	
	The cocycle $(f,A)$ is said to be \emph{fiber-bunched} if there exists $N\in \mathbb{N}$ such that for every $x\in M$, \begin{equation}\label{eq.FB}
		\|A^N(x)\|\|A^N(x)^{-1}\|\zeta^{\alpha N} <1.
	\end{equation}
	
	This condition implies the existence of stable and unstable holonomies which in this case are linear isomorphisms $H_{x,y}^{s/u}\colon \mathbb{R}^d\to \mathbb{R}^d$. For $y\in W^s(x)$, the holonomies are defined as $H^s_{x,y}=\lim_{n\to \infty}A^n(y)^{-1}\circ A^n(x)$. The definition is analogous for points in the same unstable manifold.
	
	The cocycle $(f,A)$ induces a skew product map on the projective fiber bundle, defined as
	$$
	PF_A\colon M\times P\mathbb{R}^d\to M\times P\mathbb{R}^d, \quad (x,[v])\mapsto (f(x),[A(x)v]).
	$$
	
	Observe that $PF_A$ acts by diffeomorphism on the fibers. For any $PF_A$-invariant measure $m$, the Lyapunov exponents of $PF_A$ exists $(x,v)$-almost every point, as the limits $$\lim_{n\to \infty}\frac{1}{n}\log \|D(PF_A)^n_x([v])w\|,\quad w\in T_{[v]} P\mathbb{R}^d.$$
	
	We remark that if $A$ is a $C^1$ map, then the fiber-bunched condition (with $\alpha=1$) implies that $PF_A$ is a partially hyperbolic diffeomorphism and there is a relation between the Lyapunov exponents of $PF_A$ as projective cocycle and the center Lyapunov exponents of the partially hyperbolic map.  Moreover, if we allow the base map $f$ to be a shift map and $A$ to depend only on the zero coordinate, then we can relate the projective cocycle with the setting of random dynamics mentioned in Section~\ref{sec.Furstenberg}.
	
	The following relation holds for any $A$ continuous linear cocycle over $f$,
	$$
	\|A(x)\|^{-1} \|A(x)^{-1}\|^{-1}\leq \|D(PF_A)_x([v])w\|\leq \|A(x)\| \|A(x)^{-1}\|,
	$$
	for $w\in T_{[v]} P\mathbb{R}^{d}$. Therefore, if $\mu$ is an $f$-invariant ergodic measure and $m$ is an $PF_A$-invariant measure projecting to $\mu$, the absolute value of every Lyapunov exponent of $PF_A$ is bounded by $\lambda_1(A)-\lambda_k(A)$. In particular, if $k=1$, this implies that all the Lyapunov exponents of $PF_A$ are zero.
	
	If $(f,A)$ is a fiber-bunched cocycle, then $PF_A$ also admits stable and unstable holono\-mies, defined by $h^{s/u}_{x,y}=PH^{s/u}_{x,y}$. Then, for any measure $m$ which is $PF_A$-invariant, we can consider the disintegration along the fibers and extends the notion of $su$-invariance as in Definition \ref{u-state}.
	
	The following version of the Invariance Principle is a direct consequence of the measurable version of Ledrappier \cite{ledrappierIP} and was proven for Anosov maps in \cite{BGV} and for partially hyperbolic maps in \cite{ASV}.
	
	\begin{theorem}\label{IP-linear}
		Let $f\colon M \to M$ be a $C^{1}$ Anosov or partially hyperbolic diffeomorphism. Let $A\colon M\to GL(d,\mathbb{R})$ be an $\alpha$-H\"older linear cocycle such that $(f,A)$ is fiber-bunched.
		Suppose $\mu$ is an $f$-invariant ergodic measure such that $\lambda_{1}(A)=\lambda_{k}(A)$. Then, every $PF_A$-invariant measure $m$ projecting to $\mu$ admits a disintegration $\left\{m_{x} : x\in M \right\}$ along the fibers which is $su$-invariant.
	\end{theorem}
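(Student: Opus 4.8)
The plan is to read the statement as an instance of the measurable version of the Invariance Principle of Ledrappier \cite{ledrappierIP}, applied to the projective skew product $PF_A$ over $f$, with the compact projective fibres $P\bR^d$ playing the role of the center. The first step is purely about exponents: since $\lambda_1(A)>\dots>\lambda_k(A)$, the hypothesis $\lambda_1(A)=\lambda_k(A)$ forces $k=1$, and then the bound recorded just before the statement shows that every Lyapunov exponent of $PF_A$---with respect to any $PF_A$-invariant measure $m$ projecting to $\mu$---has absolute value at most $\lambda_1(A)-\lambda_k(A)=0$. Hence all fibre Lyapunov exponents of $m$ vanish $m$-almost everywhere.

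Fibre-bunching provides the stable and unstable holonomies $h^{s/u}_{x,y}=PH^{s/u}_{x,y}$ of $PF_A$, which are precisely the maps in the statement, and one has to show that the disintegration $\{m_x\}$ of $m$ along the fibres is invariant under them. I would adapt the argument from the proof of the Invariance Principle above, now relative to the factor $\pi$: since $PF_A$ is a skew product over $f$ with compact fibres, the roles of ``diffeomorphism'' and ``base'' are played by $PF_A$ and $f$, and only the compactness of the fibres is used. Concretely, build an increasing measurable partition $\eta$ subordinate to $\cW^u_f$, lift it through $\pi$ to an increasing partition subordinate to $\cW^u_{PF_A}$, and compare conditional entropies; this yields $h^u_m(PF_A)\le h^u_\mu(f)$, with equality---after running the comparison for all the iterates $(PF_A)^{-n}$ and letting the unstable-plaque diameters shrink---if and only if $\{m_x\}$ is $u$-invariant. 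Equality does hold here, because the fibre of $PF_A$ carries no entropy: vanishing of the fibre exponents together with the relative Ruelle inequality gives $h_m(PF_A)=h_\mu(f)$, whence $h^u_m(PF_A)=h^u_\mu(f)$. Applying the same argument to $(PF_A)^{-1}$, which is the projective skew product of the fibre-bunched cocycle $\bigl(f^{-1},A(f^{-1}(\cdot))^{-1}\bigr)$---whose Lyapunov exponents are again all equal---gives $s$-invariance, and the conjunction is the asserted $su$-invariant disintegration $\{m_x\}$.

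The step I expect to be the main obstacle is regularity. When $A$ is only $\alpha$-H\"older, $PF_A$ is not a $C^1$ diffeomorphism, so neither the smooth Ledrappier--Strelcyn / Ledrappier--Young theory (measurable increasing partitions subordinate to the unstable foliation, the entropy formula along it, the relative Ruelle inequality) nor the Invariance Principle as literally stated is directly available; one must verify that all of these survive in the measurable category, using only the H\"older unstable lamination and the H\"older holonomies of $PF_A$. This is exactly what is carried out in the measurable Invariance Principle of \cite{ledrappierIP} and in its implementations for fibre-bunched cocycles over Anosov and partially hyperbolic bases in \cite{BGV,ASV}. Alternatively, one can bypass the entropy machinery and run Ledrappier's martingale argument directly: a $PF_A$-invariant measure all of whose fibre exponents vanish has $su$-invariant fibre disintegration, the vanishing of the fibre exponents being precisely what forces the relevant martingale to have no drift.
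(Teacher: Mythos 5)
Your proposal is correct and follows exactly the route the paper indicates: the survey gives no proof of Theorem~\ref{IP-linear}, stating that it is ``a direct consequence of the measurable version of Ledrappier'' as implemented in \cite{BGV,ASV}, and your reduction ($\lambda_1=\lambda_k$ forces $k=1$, hence all projective fibre exponents vanish by the derivative bound recorded before the statement, hence the measurable Invariance Principle applies via the holonomies furnished by fibre-bunching) is precisely that argument. You also correctly flag the only delicate point, namely that $PF_A$ is merely H\"older so one must use the measurable (martingale) version rather than the smooth entropy theory.
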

	
	\subsubsection*{Continuity of Lyapunov exponents for $d=2$}
	
	In the case that $A\colon M\to \SL$, there exist at most two different Lyapunov exponents, which satisfy $\lambda_1(A)\geq \lambda_2(A)$ and $\lambda_1(A)=-\lambda_2(A)$. In order to simplify the notation, we denote $\lambda(A)=~\lambda_1(A)$, $E^+(x)=E_1(x)$ and $E^-(x)=E_2(x)$.
	
	Fix $f\colon M \to M$ an Anosov or partially hyperbolic diffeomorphism and $\mu$ an \mbox{$f$-invariant} ergodic measure. Let $A\colon M\to \SL$ be a continuous map. We are interested in the continuity of the map $$A\mapsto \lambda(A), $$
	where we take the uniform topology $\|A-B\|=\sup_{x\in M} \|A(x)-B(x)\|$.
	
	The function $\lambda(A)$ is upper semi-continuous, and as $0\leq \lambda(A)$ we have that if $\lambda(A)=0$ then $A$ is a continuity point of the Lyapunov exponents.
	
	If $\lambda(A)>0$, then there are two ergodic $PF_A$-invariant measures defined as, $$m^+=\int_M \delta_{E^+(x)} d\mu\quad \text{and}\quad m^-=\int_M \delta_{E^-(x)} d\mu.$$ Observe that,
	\begin{equation}\label{eq.lambda-m}
		\begin{aligned}
			&\int_{M\times P \mathbb{R}^d} \log \|A(x)v\|d m^+(x,[v])=\lambda(A),\\
			&\int_{M\times P \mathbb{R}^d} \log \|A(x)v\|d m^-(x,[v])=-\lambda(A),
		\end{aligned}
	\end{equation}
	where $v$ is a unit vector in the class of $[v]$.  Moreover, every $PF_A$-invariant probability measure $m$ that projects to $\mu$ is a convex combination of $m^+$ and $m^-$.
	
	
	Suppose $A$ is an $\alpha$-H\"older cocycle such that $(f,A)$ is fiber-bunched. If $y\in W^s(x)$, for every $n\in \mathbb{N}$, $$A^n(x)v=H^s_{f^n(x),f^n(y)} A^n(y)H^s_{x,y}v.$$ Therefore, $$
	\lim_{n\to \infty}\frac{1}{n}\log\|A^n(x)v\|=\lim_{n\to \infty}\frac{1}{n}\log\|A^n(y)H^s_{x,y}v\|.
	$$ Since $E^-(x)$ is defined as the unique subspace such that for $v\in E^-(x)$, $$\lim_{n\to \infty}\frac{1}{n}\log\|A^n(x)v\|=-\lambda(A),$$ we conclude that
	$E^-(y)=H^s_{x,y} E^-(x)$. We can repeat an analogous argument to study the invariance of $E^+$. As a consequence, we have the following result.
	\begin{lemma}\label{lem.m+}
		The measure $m^+$ admits a $u$-invariant disintegration along the fibers and $m^-$ an $s$-invariant disintegration.
	\end{lemma}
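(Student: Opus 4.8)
The plan is to unpack the definition of $m^+$ and show that the natural disintegration along the fibers is given by the Dirac masses $m^+_x=\delta_{E^+(x)}$, and then exploit the holonomy-invariance of the Oseledets subspaces established in the paragraph preceding the lemma. First I would observe that by construction $m^+=\int_M \delta_{E^+(x)}\,d\mu(x)$, so in the fiber bundle $M\times P\mathbb{R}^d$ the measurable partition into fibers $\{x\}\times P\mathbb{R}^d$ admits the disintegration $x\mapsto \delta_{E^+(x)}$: indeed this family satisfies the three Rokhlin conditions, since each $\delta_{E^+(x)}$ is supported on the fiber over $x$, the map $x\mapsto E^+(x)$ is measurable by Oseledets, and integrating recovers $m^+$. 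By uniqueness of the disintegration (up to the usual almost-everywhere identification) this is \emph{the} fiber disintegration of $m^+$, so $m^+_x=\delta_{E^+(x)}$.

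Next I would bring in the holonomy invariance. For $y\in W^s(x)$ the argument in the excerpt shows $E^-(y)=H^s_{x,y}E^-(x)$, and the analogous computation using $A^n(x)v=H^s_{f^n(x),f^n(y)}A^n(y)H^s_{x,y}v$ together with the characterization of $E^+$ as the subspace along which vectors grow with rate $+\lambda(A)$ (equivalently, the complement behavior under $f^{-n}$) gives $E^+(y)=H^s_{x,y}E^+(x)$; symmetrically, for $y\in W^u(x)$ one gets $E^+(y)=H^u_{x,y}E^+(x)$ and $E^-(y)=H^u_{x,y}E^-(x)$. Since the induced projective holonomies are $h^{s/u}_{x,y}=PH^{s/u}_{x,y}$, pushing forward a Dirac mass gives $(h^u_{x,y})_*\delta_{E^+(x)}=\delta_{h^u_{x,y}E^+(x)}=\delta_{E^+(y)}=m^+_y$, which is exactly the $u$-invariance of the disintegration of $m^+$ in the sense of Definition \ref{u-state} (the constant $c$ is $1$ here, as both sides are probability measures). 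Running the same reasoning with $E^-$ and stable holonomies yields the $s$-invariance of $m^-_x=\delta_{E^-(x)}$.

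Finally I would note the one point requiring a little care: the invariance statements $E^\pm(y)=H^{s/u}_{x,y}E^\pm(x)$ hold only on a full-$\mu$-measure set where the Oseledets splitting is defined, and for $s$-invariance one needs this set to be saturated (or essentially saturated) by stable holonomies; this is handled by the standard argument that the Oseledets subspaces, being defined by asymptotic growth rates and the holonomies intertwining the cocycle iterates, automatically respect the stable/unstable partition on a holonomy-invariant full-measure set. The main obstacle, such as it is, is precisely this measurability-and-saturation bookkeeping for $E^+$: unlike $E^-$, which is characterized by the \emph{slowest} growth and hence directly by a $\liminf$ condition preserved under the holonomy conjugacy, $E^+$ is characterized via the forward iterates and one must check the holonomy relation $H^s_{f^n(x),f^n(y)}$ does not spoil the rate; however, since the holonomies are uniformly bounded along orbits (a consequence of fiber-bunching), the $\tfrac1n\log$ limits are unaffected, so this obstacle is routine. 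Everything else is a direct application of Oseledets' theorem and the definitions already recalled in the excerpt.
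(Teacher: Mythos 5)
Your identification of the fiber disintegrations as $m^+_x=\delta_{E^+(x)}$ and $m^-_x=\delta_{E^-(x)}$, and your treatment of $E^-$ under stable holonomies, match the paper. But there is a genuine error in the step where you claim $E^+(y)=H^s_{x,y}E^+(x)$ for $y\in W^s(x)$ (and, ``symmetrically'', $E^-(y)=H^u_{x,y}E^-(x)$ for $y\in W^u(x)$). The ``characterization of $E^+$ as the subspace along which vectors grow with rate $+\lambda(A)$'' is not a characterization: when $\lambda(A)>0$, \emph{every} vector outside the one-dimensional subspace $E^-(x)$ has forward growth rate $+\lambda(A)$, so the forward-iterate conjugacy $A^n(x)v=H^s_{f^n(x),f^n(y)}A^n(y)H^s_{x,y}v$ only shows that $H^s_{x,y}$ preserves $E^-$; it says nothing about $E^+$. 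In fact $E^+$ is in general \emph{not} invariant under stable holonomies (nor $E^-$ under unstable ones) --- this asymmetry is precisely why the lemma asserts only $u$-invariance for $m^+$ and only $s$-invariance for $m^-$. If your stronger claims held, $m^+$ would be $su$-invariant whenever $\lambda(A)>0$, and the pinching--twisting proposition later in the paper would then force $\lambda(A)=0$ for every pinching and twisting cocycle, contradicting the corollary that follows it.

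The correct route to the $u$-invariance of $m^+$ --- which is all the lemma needs --- is the genuine mirror image of the $E^-$ argument, run with \emph{backward} iterates: $E^+(x)$ is the unique Oseledets subspace characterized by $\lim_n\frac1n\log\|A^{-n}(x)v\|=-\lambda(A)$ for $v\in E^+(x)$, and for $y\in W^u(x)$ the unstable holonomy $H^u_{x,y}=\lim_n A^{-n}(y)^{-1}A^{-n}(x)$ conjugates the backward iterates up to the uniformly bounded correction $H^u_{f^{-n}(x),f^{-n}(y)}$ (bounded because $f^{-n}(x)$ and $f^{-n}(y)$ approach each other). Hence backward growth rates are preserved, $E^+(y)=H^u_{x,y}E^+(x)$, and $(h^u_{x,y})_*\delta_{E^+(x)}=\delta_{E^+(y)}$. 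Your final paragraph's worry about $H^s_{f^n(x),f^n(y)}$ ``spoiling the rate'' is aimed at the wrong holonomy: the obstacle is not boundedness of the correction terms but that forward rates cannot distinguish $E^+$ from any other complement of $E^-$.
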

	
	We define the $C^{\alpha}$ topology in the space of $\alpha$-H\"older continuous maps $A\colon M\to \SL$ given by the distance, $$\|A-B\|_{\alpha}=\|A-B\|+ H_{\alpha}(A-B),$$ where $H_{\alpha}(A)$ denotes the H\"older constant of $A$.
	
	We remark that the set of fiber-bunched cocycles is open in this topology. Moreover, if $A$ is fiber-bunched and $\|A_j-A\|_{\alpha}\to 0$, then the holonomies of $A_j$ converges uniformly to the holonomies of $A$.
	
	The next proposition characterizes the discontinuity points of the function \mbox{$A\mapsto \lambda(A)$}. It first appears in \cite{AV-IP}.
	
	\begin{proposition}\label{thm.continuity} Fix $f$ to be an Anosov or partially hyperbolic diffeomorphism and $\mu$ an $f$-invariant ergodic measure. Let $A$ be an $\alpha$-H\"older continuous cocycle such that $(f,A)$ is fiber-bunched.
		
		If $A$ is a discontinuity point of the Lyapunov exponents for the $\alpha$-H\"older topology, then every $PF_A$-invariant probability measure $m$ that projects to $\mu$ has $su$-invariant disintegration along the fibers.
	\end{proposition}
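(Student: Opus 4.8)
The idea is to argue by contradiction using upper semicontinuity of $\lambda$ together with the Invariance Principle. Suppose $A$ is a discontinuity point, so there exist $\alpha$-Hölder cocycles $A_j \to A$ in the $C^\alpha$ topology with $\lambda(A_j) \not\to \lambda(A)$; by upper semicontinuity we may pass to a subsequence and assume $\lambda(A_j) \to \ell < \lambda(A)$, and in particular $\lambda(A) > 0$. Let $m$ be any $PF_A$-invariant probability measure projecting to $\mu$; we must show $m$ has $su$-invariant disintegration. Since $\lambda(A)>0$, by the discussion preceding the statement $m$ is a convex combination $m = a\, m^+ + (1-a)\, m^-$, so by linearity of the disintegration it suffices to treat $m^+$ and $m^-$ separately. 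By Lemma~\ref{lem.m+}, $m^+$ already has a $u$-invariant disintegration and $m^-$ an $s$-invariant one, so the real content is to produce the \emph{missing} invariance: $s$-invariance of the disintegration of $m^+$ (and symmetrically $u$-invariance of that of $m^-$).

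**Producing the missing invariance.** To get $s$-invariance of $m^+$ I would build, for each $j$, a convenient $PF_{A_j}$-invariant measure and take a weak$^*$ limit. Because $(f,A)$ is fiber-bunched and the set of fiber-bunched cocycles is open in the $C^\alpha$ topology, for $j$ large $(f,A_j)$ is fiber-bunched, the holonomies $H^{s/u}_{x,y}(A_j)$ are well defined, and they converge uniformly to $H^{s/u}_{x,y}(A)$. Now I distinguish two cases for each large $j$. If $\lambda(A_j)=0$, then $PF_{A_j}$ has all Lyapunov exponents zero (since $k=1$ for $A_j$), so by the Invariance Principle (Theorem~\ref{IP-linear}) \emph{every} $PF_{A_j}$-invariant measure projecting to $\mu$ has $su$-invariant disintegration; pick any such $m_j$. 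If $\lambda(A_j)>0$, take $m_j = m_j^+$, the measure $\int_M \delta_{E^+_{A_j}(x)}\,d\mu$, which by Lemma~\ref{lem.m+} has a $u$-invariant disintegration, and also record $m_j^- = \int_M \delta_{E^-_{A_j}(x)}\,d\mu$, which has an $s$-invariant disintegration. Passing to a subsequence, $m_j \to m_\infty$ and $m_j^- \to m_\infty'$ in the weak$^*$ topology, both $PF_A$-invariant and projecting to $\mu$. The invariance of the disintegration under holonomies is a closed condition under weak$^*$ convergence when the holonomies converge uniformly (this is the key technical point, handled as in \cite{AV-IP}): $u$-invariance of $m_j$ passes to $m_\infty$, and $s$-invariance of $m_j^-$ passes to $m_\infty'$.

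**Closing the argument.** It remains to identify the limits with $m^{\pm}$. Using \eqref{eq.lambda-m} and the convergence $A_j \to A$ uniformly, together with $\lambda(A_j)\to\ell<\lambda(A)$, one shows $\int \log\|A(x)v\|\,dm_\infty(x,[v]) = \lim_j \int \log\|A_j(x)v\|\,dm_j = \ell$; since every $PF_A$-invariant measure projecting to $\mu$ is a convex combination of $m^+$ and $m^-$ with $\int\log\|Av\|$ equal to $\lambda(A)$ on $m^+$ and $-\lambda(A)$ on $m^-$, and $-\lambda(A) < \ell < \lambda(A)$ forces the combination to be nontrivial, we get that $m_\infty$ is a genuine convex combination $b\, m^+ + (1-b)\, m^-$ with $b\in(0,1)$. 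Its disintegration is then a convex combination of those of $m^+$ and $m^-$; being $u$-invariant and with the $m^-$-part already $s$-invariant and the $m^+$-part already $u$-invariant, a little bookkeeping (or repeating the argument with $m_j^-$ to get the symmetric statement, then intersecting full-measure sets) yields that the disintegrations of $m^+$ and $m^-$ are each $su$-invariant. By linearity this gives $su$-invariance for the original $m$.

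**Main obstacle.** The hard part is the weak$^*$-limit step: showing that uniform convergence of the holonomies $H^{s/u}_j \to H^{s/u}$ together with $m_j \to m$ weak$^*$ implies that a holonomy-invariant disintegration of $m_j$ passes to a holonomy-invariant disintegration of the limit $m$. Disintegrations are only defined almost everywhere and need not vary continuously, so one cannot naively pass to the limit fiberwise; the correct route is to reformulate $u$-invariance as an integral identity against continuous test functions on $M\times P\mathbb{R}^d$ (averaging the disintegration over local unstable plaques, as in Ledrappier's measurable characterization), which \emph{is} weak$^*$-closed, and then recover the fiberwise invariant disintegration of the limit from this. This is exactly the technical heart of \cite{AV-IP, ledrappierIP}, and I would cite it rather than reprove it, but it is where all the work lives.
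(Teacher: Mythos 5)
Your proposal is correct and follows essentially the same route as the paper: take a non-converging sequence $A_j\to A$, form the measures $m^+_j$ (or an arbitrary invariant measure when $\lambda(A_j)=0$), pass to a weak$^*$ limit using that holonomy-invariance of disintegrations survives the limit, use \eqref{eq.lambda-m} to see the limit is a nontrivial convex combination of $m^+$ and $m^-$, and subtract to transfer the missing invariance to $m^-$ (then argue symmetrically for $m^+$). You also correctly isolate the same technical point the paper defers to references, namely that $u$-invariance of disintegrations is closed under weak$^*$ convergence of the measures together with uniform convergence of the holonomies.
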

	\begin{proof}
		Let $A_j$, $j\in \mathbb{N}$ be a sequence of cocycles such that $\|A_j-A\|_{\alpha}\to 0$ and the Lyapunov exponents do not converge, that is, $\lim_{j\to \infty} \lambda(A_j)\neq \lambda(A)$.
		
		Then, the cocycles $(f, A_j)$ are all fiber-bunched with the same $N$ in Equation \eqref{eq.FB}.  Let $m^+_j$ be the measure defined as before for $A_j$ if $\lambda(A_j)>0$; if $\lambda(A_j)=0$ we take $m^+_j$ to be any $PF_A$-invariant probability measure. Observe that, by Theorem \ref{IP-linear} and Lemma \ref{lem.m+}, in both cases the measure $m^+_j$ admits a $u$-invariant disintegration along the fibers.
		
		Up to taking a sub-sequence we can assume that $m^+_j$ converges in the weak-star topology to an $PF_A$-invariant measure $m$ that projects to $\mu$. Moreover, $m$ also admits $u$-invariant disintegration. This is a consequence of the continuity of the holonomies, since $m^+_j$ admits $u$-invariant disintegration for every $j$. The proof of this fact is not direct. We refer the reader to \cite{bbb} for the Anosov case and \cite{notasm} for the partially hyperbolic case.
		
		Using Equation \eqref{eq.lambda-m}, we conclude,
		$$
		\lim_{j\to \infty} \lambda(A_j)=\int_{M\times P \mathbb{R}^d} \log \|A(x)v\|d m(x,[v]).
		$$
		Recall that $m=a m^+ + b m^-$ with $a+b=1$, then
		$\lim_{j\to \infty} \lambda(A_j)=a \lambda(A)-b \lambda(A)$, which implies that $b> 0$.
		
		
		Then, we can write $m^-=b^{-1}(m-a m^+)$. By Lemma \ref{lem.m+} and the observation above, we conclude that $m^-$ admits $su$-invariant disintegration. Analogously, we can repeat the argument exchanging the roles of $m^+$ and $m^-$, in order to conclude that $m^+$ also admits  $su$-invariant disintegration. As every $PF_A$-invariant measure is a convex combination of $m^+$ and $m^-$, we proved the theorem.
	\end{proof}

	\subsection{Smooth Cocycles}
	
	Theorem \ref{IP-linear} was extended to the non-linear context in \cite{AV-IP}. The authors proved a measurable version that generalizes \cite{ledrappierIP} and can be applied to different non-linear settings. Here we state the version for trivial fiber bundles over Anosov maps \cite{AV-IP} and over partially hyperbolic diffeomorphisms \cite{ASV}.
	
	Suppose $f\colon M \to M$ is a $C^{1}$ Anosov or partially hyperbolic diffeomorphism. Let $N$ be a compact manifold and $\mathcal{A}\colon M\to \mathit{Diff}^1(N)$ be a continuous map. Then, $\mathcal{A}$ defines a \emph{smooth cocycle}, $$\mathcal{F}_{\mathcal{A}}\colon M\times N \to M\times N, \quad (x,y)\mapsto (f(x), \mathcal{A}(x)(y)).$$
	
	Observe that the projective cocycle defined in the last section is an example of a smooth cocycle.
	
	For any $\mathcal{F}_{\mathcal{A}}$-invariant measure $m$, the Lyapunov exponents of $\mathcal{F}_{\mathcal{A}}$ exists $(x,\xi)$-almost every point, as the limits $$\lim_{n\to \infty}\frac{1}{n}\log \|D(\mathcal{F}_{\mathcal{A}})^n_x(\xi)v\|,\quad v\in T_{\xi} N.$$
	
	\begin{theorem}\label{IP-nonlinear}
		Let $f\colon M \to M$ be a $C^{1}$ Anosov or partially hyperbolic diffeomorphism. Let $\mathcal{A}\colon M\to \mathit{Diff}^1(N)$ be a continuous smooth cocycle admitting stable and unstable holonomies.
		
		Suppose $m$ is an $\mathcal{F}_{\mathcal{A}}$-invariant probability measure such that for $m$-almost every $(x,\xi)$ all the Lyapunov exponents are zero. Then, $m$ admits a disintegration $\left\{m_{x} : x\in M \right\}$ along the fibers that is $su$-invariant.
	\end{theorem}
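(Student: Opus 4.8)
The plan is to follow the circle of ideas of Furstenberg, Ledrappier and Avila--Viana, and to argue in a way parallel to the skew-product proof sketched after the Main Theorem. First I would reduce: by the ergodic decomposition of $m$ we may assume $m$ is ergodic (the Lyapunov exponents are $\cF_\cA$-invariant, $m$-a.e.\ zero, hence a.e.\ zero on a.e.\ ergodic component), and it suffices to prove that the fiber disintegration $\{m_x\}$ of $m$ is invariant under the \emph{unstable} holonomies on a full-measure set; applying that conclusion to $f^{-1}$ will then give the stable invariance. Two elementary observations are used throughout. (i) Since $m$ is $\cF_\cA$-invariant and the fiber disintegration is essentially unique, $\{m_x\}$ is equivariant: $(\cA(x))_* m_x = m_{f(x)}$ for $\mu$-a.e.\ $x$, with $\mu=\pi_* m$; iterating, $m_x=(\cA^n(f^{-n}x))_* m_{f^{-n}x}$, where $\cA^n(z):=\cA(f^{n-1}z)\circ\dots\circ\cA(z)$. (ii) The equivariance of the unstable holonomies, $\cA(y)\circ h^u_{x,y}=h^u_{f(x),f(y)}\circ\cA(x)$, iterated backwards and combined with $\dist(f^{-n}x,f^{-n}y)\to 0$ for $y\in W^u_{\rm loc}(x)$, gives both the recursion $h^u_{x,y}=\cA^n(f^{-n}y)\circ h^u_{f^{-n}x,f^{-n}y}\circ\cA^n(f^{-n}x)^{-1}$ and the representation $h^u_{x,y}=\lim_{n\to\infty}\cA^n(f^{-n}y)\circ\cA^n(f^{-n}x)^{-1}$, uniformly on $N$. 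Conceptually, $u$-invariance says exactly that $m$ has a product structure transverse to the $\cF_\cA$-invariant foliation of $M\times N$ whose leaves are the graphs $\{(y,h^u_{x,y}(\xi)):y\in W^u(x)\}$, just as in the skew-product case.

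Fix a (sufficiently generic) $x$ and $y\in W^u_{\rm loc}(x)$, and consider the defect $\beta:=m_y-(h^u_{x,y})_* m_x$, a signed measure on $N$. Using (i), (ii) and the recursion, for every $n$,
\[
\beta=(\cA^n(f^{-n}y))_*\big(m_{f^{-n}y}-m_{f^{-n}x}\big)\;-\;(\cA^n(f^{-n}y))_*\big[(h^u_{f^{-n}x,f^{-n}y}-\mathrm{id})_* m_{f^{-n}x}\big].
\]
Testing against a Lipschitz $g\colon N\to\bR$, using the Kantorovich inequality $|\int h\,d(\nu_1-\nu_2)|\le \Lip(h)\,W_1(\nu_1,\nu_2)$ for probability measures, the bound $\Lip(g\circ\cA^n(f^{-n}y))\le \Lip(g)\,\|D\cA^n(f^{-n}y)\|_\infty$, and $W_1\big(m_{f^{-n}x},(h^u_{f^{-n}x,f^{-n}y})_* m_{f^{-n}x}\big)\le d_{C^0}(\mathrm{id},h^u_{f^{-n}x,f^{-n}y})$, this yields
\[
\Big|\int g\,d\beta\Big|\le \Lip(g)\,\big\|D\cA^n(f^{-n}y)\big\|_\infty\Big(W_1\big(m_{f^{-n}y},m_{f^{-n}x}\big)+d_{C^0}\big(\mathrm{id},h^u_{f^{-n}x,f^{-n}y}\big)\Big).
\]
The holonomy term decays exponentially in $n$ (Hölder holonomies plus uniform contraction of $f^{-1}$ along $W^u$), and $W_1(m_{f^{-n}y},m_{f^{-n}x})\to 0$ along a set of times $n$ of density close to one: indeed $\dist(f^{-n}x,f^{-n}y)\to 0$, the map $z\mapsto m_z$ is uniformly continuous on a set $K$ with $\mu(K)$ close to $1$ (Lusin), and the backward orbits visit $K$ with frequency close to $\mu(K)$ (Birkhoff), so both points lie in $K$ for a positive-density set of times. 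Hence the two factors in the parenthesis are small; the whole argument turns on whether the prefactor $\|D\cA^n(f^{-n}y)\|_\infty$ can be prevented from spoiling this.

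This is precisely where the hypothesis on the Lyapunov exponents enters, and it is the main obstacle. One must show that along the relevant backward orbits the fiberwise derivative cocycle grows subexponentially — with rate smaller than the contraction rate along $W^u$ — uniformly enough on the fibers (say, on a neighbourhood of $\supp m_{f^{-n}x}$). This is genuinely delicate, because Oseledets' theorem only gives subexponential growth \emph{pointwise} in $(x,\xi)$, whereas the estimate requires control of $\sup_{\eta}\|D\cA^n(f^{-n}y)(\eta)\|$; closing this gap by a further, fibered application of Lusin's theorem, exploiting the invariance of $m$ and the vanishing of the fiber exponents, is the technical heart of \cite{AV-IP} (Anosov base) and \cite{ASV} (partially hyperbolic base). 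Granting it, letting $n$ range over the good times forces $\int g\,d\beta=0$ for all Lipschitz $g$, hence $\beta=0$, i.e.\ $u$-invariance on a full-measure set.

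Finally, applying this to $f^{-1}$ — still a $C^1$ Anosov or partially hyperbolic diffeomorphism, carrying the smooth cocycle $x\mapsto\cA(f^{-1}x)^{-1}$, whose unstable holonomies are the stable holonomies of $\cF_\cA$, and for which $m$ still has all fiber Lyapunov exponents zero — gives $s$-invariance on a full-measure set. On the intersection of the two full-measure sets the fiber disintegration $\{m_x:x\in M\}$ is $su$-invariant, which is the claim. I would close with the remark that this mechanism is the geometric face of the Furstenberg-entropy inequality of Section~\ref{sec.Furstenberg}: $u$-non-invariance of $\{m_x\}$ along the $W^u$-leaves would produce strictly positive ``fibered Furstenberg entropy'', which that inequality bounds by the sum of the positive fiber Lyapunov exponents — zero here by hypothesis.
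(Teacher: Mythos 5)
Your overall architecture (reduce to ergodicity, prove $u$-invariance, apply the result to $f^{-1}$ to get $s$-invariance, use the equivariance identities for $m_x$ and $h^u_{x,y}$) is sound, and your reduction steps are correct. But the proof has a genuine gap exactly where you flag it, and that gap is not a technicality to be ``granted'': it is the entire content of the theorem. Your estimate
\[
\Bigl|\int g\,d\beta\Bigr|\le \Lip(g)\,\bigl\|D\cA^n(f^{-n}y)\bigr\|_\infty\Bigl(W_1\bigl(m_{f^{-n}y},m_{f^{-n}x}\bigr)+d_{C^0}\bigl(\mathrm{id},h^u_{f^{-n}x,f^{-n}y}\bigr)\Bigr)
\]
only closes if $\sup_{\eta}\|D\cA^n(f^{-n}y)(\eta)\|$ grows slower than the contraction of $f^{-1}$ along $W^u$. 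The hypothesis gives vanishing of the fiber exponents only $m$-almost everywhere and only pointwise in $\xi$; the supremum over $\eta$ in (a neighbourhood of) $\supp m_{f^{-n}x}$ can still grow exponentially, and Lusin/Egorov arguments give uniformity only on a large-measure set of fiber points, not on the full support, so the prefactor cannot be tamed this way. When such a uniform bound \emph{is} available (fiber-bunching/domination), your telescoping argument is essentially the standard proof that holonomies exist and that \emph{continuous} invariant sections are holonomy-invariant --- but that argument does not use the zero-exponent hypothesis at all, which is a sign that it cannot be the right mechanism here.

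The known proofs (Ledrappier \cite{ledrappierIP}, Avila--Viana \cite{AV-IP}, and \cite{ASV}), and the mechanism this survey actually illustrates in Section~\ref{sec.IP} and Section~\ref{sec.Furstenberg}, go through an entropy argument rather than a direct Wasserstein estimate: one introduces a relative (Furstenberg-type) entropy comparing $m_y$ with $(h^u_{x,y})_*m_x$ (equivalently, a conditional entropy of the disintegration along local unstable sets), shows via Jensen's inequality and a martingale convergence argument that this quantity is nonnegative and bounded above by the integrated sum of the positive fiber Lyapunov exponents --- hence zero under the hypothesis --- and then uses the equality case of Jensen to conclude that $(h^u_{x,y})_*m_x=m_y$ almost everywhere. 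This is exactly the scheme of the skew-product proof of the Invariance Principle given after the Main Theorem, and it is the content of the Furstenberg-entropy inequality of Section~\ref{sec.Furstenberg}. You mention this only as a closing remark; to have a proof you would need to make it the argument itself, replacing the defect estimate entirely.
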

	
	The version of the Invariance Principle stated in Section~\ref{sec.IP} can be seen as an extension of Theorem \ref{IP-nonlinear} since the hypothesis of compactness of the fibers is relaxed by the hypothesis of the map acting quasi-isometric along the center.

	\section{Some applications}\label{s.applications}\label{sec.aplications}
	\subsection{Positivity of Lyapunov exponents}

	\subsubsection*{Cocycles over Anosov maps}
	
	Define $\mathcal{G}^{\alpha}(f,d)$ as the set of $\alpha$-H\"older fiber-bunched cocycles with values in $GL(d,\mathbb{R})$. Recall that for a linear cocycle $A$, $\lambda_{1}(A)$ denotes the largest Lyapunov exponent.
	
	We have the following result from \cite{BGV} for $d=2$ and \cite{BV} for the general case.
	
	\begin{theorem}\label{positive}
		Let $f\colon M\to M$ be a $C^1$ Anosov diffeomorphism and $\mu$ be an $f$-ergodic measure with local product structure and full support.
		
		There exists $\mathcal{U}$, an open and dense subset of $\mathcal{G}^{\alpha}(f,d)$ in the $\alpha$-H\"older topology, such that for every $A\in\mathcal{U}$, $\lambda_1(A)>0$.
	\end{theorem}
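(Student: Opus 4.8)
The goal is to prove that the set $\mathcal{U}$ of fiber-bunched cocycles with positive top Lyapunov exponent is open and dense in $\mathcal{G}^\alpha(f,d)$. Openness is the easier half: by Proposition~\ref{thm.continuity}, any discontinuity point of $A\mapsto\lambda_1(A)$ has all $PF_A$-invariant measures $su$-invariant, and in particular (when $\lambda_1=\lambda_k$, i.e. $\lambda_1=0$ for $d=2$; more care is needed for general $d$ where one argues on exterior powers) this forces strong rigidity; the upshot is that $\{\lambda_1>0\}$ is open, since at a point where $\lambda_1(A)>0$ continuity of the exponent (which holds away from the rigid locus) keeps it positive nearby. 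So I would state openness as a consequence of the semicontinuity of $\lambda_1$ together with Proposition~\ref{thm.continuity}, and then concentrate on density.

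For density, fix $A\in\mathcal{G}^\alpha(f,d)$ with $\lambda_1(A)=0$, so that $\lambda_1(A)=\cdots=\lambda_k(A)=0$ (all exponents vanish; for $d=2$ this is $\lambda_1=\lambda_2=0$). I want to produce arbitrarily small perturbations $B$ of $A$ in the $\alpha$-H\"older topology with $\lambda_1(B)>0$. The first step is to invoke Theorem~\ref{IP-linear}: since all Lyapunov exponents coincide, every $PF_A$-invariant measure $m$ projecting to $\mu$ has an $su$-invariant disintegration $\{m_x\}$ along the fibers. The second step is the key structural input: because $\mu$ has local product structure and full support, the $su$-invariance of $\{m_x\}$ plus continuity of the holonomies allows one to run the argument of Proposition~\ref{prop.product-structure} (in the form adapted to the projective bundle / the Anosov base, cf.\ \cite{BGV, BV}) to upgrade $\{m_x\}$ to a \emph{continuous}, globally $su$-invariant disintegration on $\supp(\mu)=M$. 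In other words, $A$ carries an invariant "section" of probability measures on $P\mathbb{R}^d$ that is simultaneously invariant under the stable and unstable holonomies of $A$ and continuous on all of $M$. This is an extremely rigid object, and the third step is to show that such rigidity can be destroyed by a perturbation: one chooses a periodic point $p$ of $f$ (these are dense, $f$ Anosov), looks at the return cocycle $A^{(\per(p))}(p)\in GL(d,\mathbb{R})$, and perturbs $A$ locally near the orbit of $p$ so that the new periodic data is a matrix with simple spectrum acting \emph{proximally}, while also perturbing so that the holonomy loop around a homoclinic point of $p$ becomes "twisting" relative to the periodic eigendirections. The continuity and $su$-invariance of the disintegration would force $m_p$ to be invariant under both the (now hyperbolic) periodic matrix and the twisting homoclinic holonomy; these two requirements are incompatible for any probability measure on $P\mathbb{R}^d$, giving a contradiction. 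Hence after an arbitrarily small perturbation no such invariant continuous disintegration exists, so by the contrapositive of the previous steps $\lambda_1(B)>0$.

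Concretely the density argument has the shape: \emph{if} $\lambda_1(B)=0$ for all $B$ near $A$, \emph{then} every such $B$ admits a continuous $su$-invariant disintegration; but I will exhibit one particular $B$ arbitrarily close to $A$ for which the pair (periodic holonomy at $p$, homoclinic holonomy based at $p$) generates a group acting on $P\mathbb{R}^{d}$ with no common invariant probability measure --- a \emph{pinching-and-twisting} configuration in the terminology of \cite{BGV, BV}. Openness of the fiber-bunched class (already noted in the excerpt) guarantees these $B$ stay in $\mathcal{G}^\alpha(f,d)$, and since fiber-bunched cocycles have holonomies depending continuously on the cocycle, the perturbation can be made genuinely $C^\alpha$-small while changing the relevant holonomy/periodic data by a definite amount; this is where one uses that the perturbation is supported in a small neighborhood of a finite piece of the orbit of $p$ and of a homoclinic segment.

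\textbf{Main obstacle.} The serious technical point is Step three: arranging, by a $C^\alpha$-small perturbation, \emph{both} that the periodic return matrix at $p$ becomes proximal/pinching \emph{and} that the associated homoclinic holonomy twists the resulting eigenflag, and then checking that a continuous $su$-invariant disintegration really is obstructed by this configuration (this is precisely the "pinching and twisting $\Rightarrow$ no invariant measure / positive exponent" mechanism, and for $d>2$ it requires the version on all exterior powers $\Lambda^p\mathbb{R}^d$, $1\le p\le d-1$, to pin down the full flag). Making the perturbation small in the H\"older norm — not just the sup norm — while still moving the holonomies by a controlled amount is the delicate quantitative heart of the proof; everything else is bookkeeping on top of Theorem~\ref{IP-linear} and Proposition~\ref{prop.product-structure}.
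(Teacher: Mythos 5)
Your proposal follows essentially the same route as the paper: apply the Invariance Principle (Theorem~\ref{IP-linear}) together with the local-product-structure upgrade (Proposition~\ref{prop.product-structure}) to obtain a \emph{continuous} $su$-invariant disintegration over $\supp(\mu)=M$, then kill it with a pinching-and-twisting configuration at a periodic point and a homoclinic holonomy loop, density being obtained by perturbing the periodic and holonomy data. The survey itself only carries this out as a sketch for $\SL2$-valued cocycles and defers the perturbation/density step --- exactly what you single out as the ``main obstacle'' --- to \cite{BGV,BV}, so you have correctly located where the real work lies. One genuine slip in the density step: for $GL(d,\mathbb{R})$-valued cocycles, $\lambda_1(A)=0$ does \emph{not} imply that all Lyapunov exponents vanish (take $A$ constant equal to $\mathrm{diag}(1,\tfrac12)$), so you cannot invoke Theorem~\ref{IP-linear} directly from $\lambda_1(A)=0$; its hypothesis is $\lambda_1(A)=\lambda_k(A)$, and for general $d$ the dichotomy must be set up on the exterior powers from the outset (as you note only in passing), or else one restricts to $\SL2$, where $\lambda_1=-\lambda_2$ makes the implication valid and where the paper's argument actually lives. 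Relatedly, as literally stated for $GL(d,\mathbb{R})$ the conclusion $\lambda_1(A)>0$ cannot hold on a dense set --- near $A\equiv\tfrac12\,\mathrm{Id}$ upper semicontinuity keeps $\lambda_1<0$ --- the intended conclusion in \cite{BV} is $\lambda_1(A)>\lambda_k(A)$, and your argument proves that version, not the one displayed.
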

	
	The hypothesis of fiber-bunched in the theorem above can be removed by the work in \cite{Almost}. Moreover, \cite{Almost} generalizes Theorem \ref{IP-linear} and Proposition \ref{prop.product-structure} to the setting of non-uniform hyperbolic maps in the base.
	
	We provide the idea of the proof of Theorem \ref{positive} for the case of $\SL$-valued cocycles. Denote $\mathcal{S}^{\alpha}$ the subset of  $\mathcal{G}^{\alpha}(f,2)$ defined by cocycles $A\colon M\to \SL$.
	
	Let $A\in \mathcal{S}^{\alpha}$. The cocycle $(f,A)$ is said to be \emph{pinching} if there exists $p\in M$, $n_p$-periodic such that $A^{n_p}(p)$ has two different eigenvalues. Let $e_1,e_2\in \mathbb{P}\mathbb{R}^2$ be the corresponding eigendirections.
	
	Moreover, $(f,A)$ is said to be \emph{twisting} if it is pinching and there exists an element $z\in W^s(p)\cap W^u(p)$ such that
	$$
	\{e_1,e_2\}\cap H^s_{z,p}\circ H^u_{p,z}(\{e_1,e_2\})= \emptyset.
	$$
	
	\begin{proposition}
		Let $A\in \mathcal{S}^{\alpha}$. If $(f,A)$ is pinching and twisting, then the projective cocycle $PF_A$ does not admit an $su$-invariant disintegration.
	\end{proposition}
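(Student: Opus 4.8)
The plan is to argue by contradiction: suppose $PF_A$ admits an $su$-invariant disintegration $\{m_x\}_{x\in M}$ along the fibers, and derive a contradiction from the pinching and twisting hypotheses. The key observation is that $su$-invariance forces a strong compatibility between the conditional measures $m_x$ and the holonomies at the periodic point $p$ and at the heteroclinic point $z\in W^s(p)\cap W^u(p)$.

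First I would exploit the periodic point $p$. Since $m_p$ should be invariant (up to the projective action) under $PF_A^{n_p}$, which is the projective action of the matrix $A^{n_p}(p)$, and since $A^{n_p}(p)$ has two distinct eigenvalues with eigendirections $e_1,e_2\in \mathbb{P}\mathbb{R}^2$, the only stationary probability measures for a hyperbolic (loxodromic) projective map are convex combinations of $\delta_{e_1}$ and $\delta_{e_2}$. More precisely, because we can iterate $PF_A^{n_p}$ forward and backward, and because the disintegration is genuinely invariant (not merely stationary) along the orbit of $p$, one concludes $m_p = a\,\delta_{e_1} + (1-a)\,\delta_{e_2}$ for some $a\in[0,1]$. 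Here one must be a little careful: $su$-invariance is an a.e.\ statement, so strictly speaking I would either pass to the support of $\mu$ (which contains $p$ since $\mu$ has full support, and in the applications one arranges $p$ to be $\mu$-generic or uses the continuous extension from Proposition~\ref{prop.product-structure}), or invoke the continuous version of the disintegration so that the identity at $p$ literally makes sense. This is the step I expect to require the most care.

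Next I would use the heteroclinic point $z$. Along $W^u(p)$ the disintegration is $u$-invariant, so $m_z = (h^u_{p,z})_* m_p$ up to normalization; along $W^s(p)$ it is $s$-invariant, so $m_p = (h^s_{z,p})_* m_z$ up to normalization. For the linear projective cocycle the holonomies $h^{s/u}$ are exactly the projectivizations $PH^{s/u}$ of the linear holonomies $H^{s/u}$. Composing, we get that $m_p$ is, up to normalization, invariant under the projective map $P(H^s_{z,p}\circ H^u_{p,z})$. Since $m_p$ is supported on $\{e_1,e_2\}$, its pushforward under $P(H^s_{z,p}\circ H^u_{p,z})$ is supported on $H^s_{z,p}\circ H^u_{p,z}(\{e_1,e_2\})$; for these two (at most two-point) measures to be proportional, their supports must intersect, i.e.\ $\{e_1,e_2\}\cap H^s_{z,p}\circ H^u_{p,z}(\{e_1,e_2\})\neq\emptyset$. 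This directly contradicts the twisting hypothesis, completing the proof.

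One technical point to address is the possibility that $m_p$ is a single atom, say $m_p=\delta_{e_1}$; then the argument still applies, giving $e_1 \in H^s_{z,p}\circ H^u_{p,z}(\{e_1,e_2\})$, again contradicting twisting. The only real subtlety, as noted, is making the evaluation of the a.e.-defined disintegration at the specific points $p$ and $z$ legitimate: the clean way is to first establish (or assume, as in the surrounding discussion) that under these hypotheses the relevant measure class admits the continuous $su$-invariant disintegration on $\supp(\mu)$ from Proposition~\ref{prop.product-structure}, and then run the above orbit/holonomy argument with the honestly continuous representative. I would present the proof in that order: reduce to the continuous disintegration, analyze $m_p$ via pinching, then close the loop via twisting at $z$.
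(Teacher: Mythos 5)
Your proof is correct and follows essentially the same route as the paper: reduce to the continuous $su$-invariant disintegration via Proposition~\ref{prop.product-structure}, use pinching to force $m_p=a\delta_{e_1}+b\delta_{e_2}$, and then use $su$-invariance under $H=H^s_{z,p}\circ H^u_{p,z}$ at the homoclinic point $z$ to contradict twisting. The subtleties you flag (evaluating an a.e.-defined disintegration at $p$ and $z$, needing full support and product structure of $\mu$) are exactly the ones the paper resolves by the same appeal to the continuous extension.
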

	\begin{proof}
		Assume, by contradiction, that there exists $m$ $PF_A$-invariant probability measure projecting to $\mu$ and admitting an $su$-invariant disintegration along the fibers.
		
		By Proposition~\ref{prop.product-structure}, the disintegration $x\mapsto m_x$ is continuous and $su$-invariant. Moreover, since $m$ is $PF_A$-invariant, for $\mu$-almost every $x\in M$, $A(x)_* m_x=m_{f(x)}$ and the continuity of the disintegration implies that this relation can be extended to every $x\in M$.
		
		The pinching property implies that $A^{n_p}(p)\colon \mathbb{P}\mathbb{R}^2\to \mathbb{P}\mathbb{R}^2$ is a Morse-Smale map with one attracting and one repelling fixed point that coincide with $e_1$ and $e_2$, respectively. Then, we have that $m_p=a\delta_{e_1}+b\delta_{e_2}$ for some $a,b\geq 0$, $a+b=1$.
		
		Let $H=H^s_{z,p}\circ H^u_{p,z}$. The $su$-invariance implies that $H_* m_p=m_p$. Therefore,
		$$
		H(\{e_1,e_2\})=\{e_1,e_2\},
		$$
		which contradicts the twisting hypothesis, proving the proposition.
	\end{proof}
	
	As a consequence of Theorems~\ref{th.product-structure} and Theorem \ref{thm.continuity}, we conclude the following result. Recall that in the $\SL$ case, $\lambda_1(A)=-\lambda_2(A)$.
	
	\begin{corollary}
		Let $A\in \mathcal{S}^{\alpha}$. If $(f,A)$ is pinching and twisting, then  $A$ has non-zero Lyapunov exponents and it is a continuity point for the Lyapunov exponents.
	\end{corollary}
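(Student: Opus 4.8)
The plan is to combine the previous proposition with the two rigidity results already established. Suppose $(f,A)$ is pinching and twisting. If $\lambda_1(A)$ were zero, then $\lambda_1(A)=\lambda_2(A)=0$, so by Theorem~\ref{IP-linear} every $PF_A$-invariant measure projecting to $\mu$ would admit an $su$-invariant disintegration along the fibers. This directly contradicts the preceding proposition. Hence $\lambda_1(A)>0$, which gives the first assertion; note that this uses the hypothesis that $\mu$ has local product structure and full support, inherited from $\mathcal{S}^\alpha\subset\mathcal{G}^\alpha(f,2)$.

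For the continuity assertion, I would argue by contrapositive using Proposition~\ref{thm.continuity}. If $A$ were a discontinuity point of $A\mapsto\lambda(A)$ in the $\alpha$-H\"older topology, then by Proposition~\ref{thm.continuity} every $PF_A$-invariant probability measure projecting to $\mu$ would have $su$-invariant disintegration along the fibers. Again this contradicts the proposition on pinching and twisting cocycles. Therefore $A$ is a continuity point.

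The only subtlety worth spelling out is that Proposition~\ref{thm.continuity} and the proposition on pinching/twisting both quantify over \emph{every} $PF_A$-invariant measure $m$ projecting to $\mu$, so the contradiction is immediate: the existence of even one such $m$ with $su$-invariant disintegration is ruled out by pinching and twisting, while both Theorem~\ref{IP-linear} (in the zero-exponent case) and Proposition~\ref{thm.continuity} (in the discontinuity case) produce such an $m$. There is no real obstacle here; the work has all been done in establishing that pinching and twisting obstruct $su$-invariance. One should only make sure that the standing hypotheses of this subsection — $f$ Anosov, $\mu$ ergodic with local product structure and full support — are exactly those required by Theorem~\ref{th.product-structure} and Proposition~\ref{prop.product-structure}, which they are, so invoking those results inside the proof of the pinching/twisting proposition is legitimate.

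\begin{proof}
Suppose $(f,A)$ is pinching and twisting. If $\lambda_1(A)=0$, then since $\lambda_1(A)=-\lambda_2(A)$ we have $\lambda_1(A)=\lambda_2(A)$, so Theorem~\ref{IP-linear} provides a $PF_A$-invariant measure projecting to $\mu$ with $su$-invariant disintegration along the fibers, contradicting the previous proposition. Hence $\lambda_1(A)>0$, so $A$ has non-zero Lyapunov exponents.

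Now suppose, for contradiction, that $A$ is a discontinuity point of the Lyapunov exponents in the $\alpha$-H\"older topology. By Proposition~\ref{thm.continuity}, every $PF_A$-invariant probability measure $m$ projecting to $\mu$ has $su$-invariant disintegration along the fibers. This again contradicts the previous proposition. Therefore $A$ is a continuity point for the Lyapunov exponents.
\end{proof}
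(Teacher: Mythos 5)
Your proof is correct and follows essentially the same route the paper intends: the paper derives this corollary in one line from the pinching/twisting proposition together with Theorem~\ref{IP-linear} and Proposition~\ref{thm.continuity}, exactly as you do. Your added remark that one only needs the (always available) existence of some $PF_A$-invariant measure projecting to $\mu$ to trigger the contradiction is the right observation and correctly fills in the only detail the paper leaves implicit.
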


	\subsubsection*{Cocycles over partially hyperbolic maps}
	
	In order to generalize Theorem \ref{positive} to the setting of partially hyperbolic diffeomorphisms in the base, we need an analogous result for Proposition \ref{prop.product-structure}.
	
	One approach in this direction is Corollary \ref{th.product-structure} combined with the notions of pinching and twisting defined in the previous section. This type of result allows one to prove the positivity of Lyapunov exponents for symplectic linear cocycles over some classes of partially hyperbolic maps. This is done in \cite{notasm}.
	
	A different technique is to consider volume-preserving diffeomorphisms. Recall that a partially hyperbolic map $f$ is center-bunched if the functions in Equation (\ref{ph}) satisfy $\vartheta<\gamma \hat{\gamma}$ and $\hat{\vartheta}<\gamma \hat{\gamma}$.
	
	Given two points $x,y\in M$, $x$ is \textit{accessible} from $y$ if there exists a path that connects $x$ to $y$, which is a concatenation of finitely many subpaths, each of which lies entirely in a single leaf of $W^u$ or a single leaf of $W^s$. We call this type of path an \textit{su-path}. This defines an equivalence relation, and we say that $f$ is \textit{accessible} if $M$ is the unique accessibility class.
	
	\begin{theorem}[Theorem A, \cite{ASV}]\label{ph.positive}
		Let $f$ be a $C^2$ partially hyperbolic volume-preserving diffeomorphism which is center-bunched and accessible, and $\mu$ be a measure in the Lebesgue class.
		
		There exists $\mathcal{U}$, an open and dense subset of $\mathcal{G}^{\alpha}(f,d)$ in the $\alpha$-H\"older topology, such that for every $A\in\mathcal{U}$, $\lambda_1(A)>0$.
	\end{theorem}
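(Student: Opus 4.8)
The plan is to mimic the $\SL2$ argument sketched for the Anosov case, replacing the notions of pinching and twisting by their appropriate analogues over partially hyperbolic bases, and using Corollary \ref{th.product-structure} (or rather its partially hyperbolic counterpart, the last unnamed corollary of Section \ref{sec.IP}) as the rigidity input. First I would fix a Pesin-style dichotomy: either $\lambda_1(A)=0$ on a residual set of cocycles $A\in\mathcal{G}^\alpha(f,d)$, in which case there is nothing to prove, or we must show that the cocycles with $\lambda_1(A)>0$ form an open and dense set. Openness follows from upper semicontinuity of $\lambda_1$ together with the fact (quoted in the excerpt) that fiber-bunching is an open condition and holonomies vary continuously; so the whole content is density. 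To prove density I would show that the cocycles failing to have an $su$-invariant disintegration of every $PF_A$-invariant measure are dense, and then invoke Theorem \ref{IP-linear}: a cocycle with $\lambda_1(A)=\lambda_k(A)$, i.e. with all exponents equal, has an $su$-invariant disintegration, so a cocycle \emph{without} such a disintegration must have $\lambda_1(A)>\lambda_k(A)$, hence $\lambda_1(A)>0$.

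The key step is therefore to define, over the accessible center-bunched $f$, an open and dense class of cocycles for which $su$-invariance is impossible. Here I would use accessibility: since $M$ is a single accessibility class, an $su$-invariant continuous section $x\mapsto m_x$ produced by the partially hyperbolic version of Proposition \ref{prop.product-structure} is invariant under all holonomies along $su$-paths, hence under the holonomy group based at any point $p$. The analogue of pinching is to require a periodic point $p$ of $f$ whose return cocycle $A^{n_p}(p)$ has simple spectrum (a Morse–Smale map on $\mathbb{P}\mathbb{R}^{d-1}$, or more generally a proximal-type dynamics on the appropriate flag manifold), forcing $m_p$ to be supported on the finite set of eigendirections; the analogue of twisting is the existence of an $su$-loop at $p$ whose holonomy does not preserve that finite set. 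Both conditions are clearly open, and one must show they are dense in $\mathcal{G}^\alpha(f,d)$: pinching by an arbitrarily small perturbation of $A$ near the orbit of $p$ making $A^{n_p}(p)$ have distinct eigenvalues, and twisting by a further small perturbation supported on an $su$-loop — this is where accessibility is essential, since it guarantees such loops exist and are plentiful. For $d>2$ one also needs the perturbation to destroy invariance of every invariant subspace, not just lines, which is handled exactly as in \cite{BV} by iterating the pinching/twisting construction across the exponent gaps.

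The main obstacle I expect is the density of the twisting condition together with the passage from the measurable $su$-invariance furnished by Theorem \ref{IP-linear} to a genuinely \emph{continuous} $su$-invariant section over $\supp(\mu)$, since in the volume-preserving setting the relevant local product structure and the regularity of holonomies rely on $f$ being $C^2$ and center-bunched (which is why those hypotheses appear). Concretely, one must verify that $m=\int \delta_{E^+(x)}\,d\mu$ (or the appropriate convex component) still admits a continuous $su$-invariant disintegration: this uses absolute continuity of the $W^{cs}$/$W^u$-holonomies for $C^2$ center-bunched $f$ and the fact that $\mu$, being in the Lebesgue class, has the local $cs\times u$ product structure, so the partially hyperbolic analogue of Proposition \ref{prop.product-structure} applies. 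Once continuity is in hand, the contradiction with twisting is immediate as in the Anosov proposition. Everything else — openness, the reduction to ``no $su$-invariant disintegration'', and the exponent-gap bootstrapping — is routine given the tools already assembled in the survey.
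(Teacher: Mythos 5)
Your overall strategy (invariance principle $\Rightarrow$ measurable $su$-invariant disintegration $\Rightarrow$ continuous $su$-invariant disintegration $\Rightarrow$ contradiction with an open dense holonomy condition) is the right one, and it is essentially what the survey attributes to \cite{ASV}. But there are two concrete problems. The most serious is your analogue of pinching: you anchor the whole obstruction at a \emph{periodic point} $p$ whose return matrix $A^{n_p}(p)$ has simple spectrum, so that $m_p$ is forced to be atomic on the eigendirections. A general $C^2$ volume-preserving, center-bunched, accessible partially hyperbolic diffeomorphism need not have any periodic points (e.g.\ the time-one map of a generic geodesic flow in negative curvature, whose length spectrum contains no rational number) — the survey itself flags non-emptiness of $\mathrm{Per}(f)$ as an \emph{extra} hypothesis in the derivative-cocycle results of \cite{M, LMY}. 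This is precisely why, as the text notes, \cite{ASV} ``replace the notion of pinching and twisting by properties of $su$-loops'': the contradiction is obtained at an arbitrary point $z$ by perturbing the cocycle along one leg of an $su$-loop so that the loop holonomy $H_\gamma$ ranges over an open set of projective transformations, all of which would have to preserve the (arbitrary, not necessarily atomic) measure $m_z$ on $P\mathbb{R}^{d-1}$ — which is impossible. Without periodic points your pinching step has nothing to attach to, and the atomicity of $m_p$, which your twisting contradiction relies on, is unavailable.

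The second issue is the continuity upgrade. You propose to run it through the local $cs\times u$ product structure of volume and the partially hyperbolic analogue of Proposition \ref{prop.product-structure}; but that corollary in the survey requires dynamical coherence (the foliation $W^{cs}$ must exist) and quasi-isometric center action, neither of which is assumed in Theorem \ref{ph.positive}. The correct input here is Theorem D of \cite{ASV}, which upgrades a measurable holonomy-invariant section to a continuous one using volume preservation, center bunching and accessibility via the Burns--Wilkinson julienne techniques — a genuinely different mechanism from product structure. Two smaller points: your opening ``dichotomy'' (``either $\lambda_1=0$ on a residual set, in which case there is nothing to prove'') is incoherent, since in that case the theorem would be false rather than vacuous; and openness of $\{\lambda_1>0\}$ does \emph{not} follow from upper semicontinuity of $\lambda_1$ (that gives openness of sublevel sets) — openness of $\mathcal{U}$ must come, as you later say correctly, from the openness of the holonomy/twisting-type condition defining it.
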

	
	
	The proof of Theorem \ref{ph.positive} is a consequence of Theorem D of \cite{ASV} where the authors established that in that setting, an $su$-invariant disintegration along the fibers can be upgraded to a disintegration which is continuous and $su$-invariant. Moreover, they replace the notion of pinching and twisting by properties of $su$-loops defined by the stable and unstable manifolds in the base.
	
	\subsubsection*{Center Lyapunov exponents}
	
	Since Theorem \ref{IP-linear} also holds true for general vector bundles (not necessarily trivial), a natural question is whether we can obtain results about positivity of the Lyapunov exponents by applying the Invariance Principle to the derivative cocycle.
	
	In particular, if $f$ is a $C^r$ partially hyperbolic diffeomorphism, we are interested in the center Lyapunov exponents, that is, the Lyapunov exponents of the cocycle $Df\vert E^c$.
	
	In the following, we consider partially hyperbolic maps with a 2-dimensional center bundle. Fixed an $f$-invariant measure $\mu$, we denote the center Lyapunov exponents by $\lambda^c_1(f,x)$ and $\lambda^c_2(f,x)$, and $\lambda^c_1(f)=\int \lambda^c_1(f,x)\, d\mu.$
	
	The main difficulty in order to obtain results analogous to the ones in the previous sections is that for the derivative cocycle, the dynamics in the base and the dynamics in the fibers are related, and we are not able to perturb one of them without modifying the other.
	
	The notions of pinching and twisting can be extended to this setting to obtain the following result.
	
	\begin{theorem}\cite{OP}
		For any $r>1$, among the volume preserving, $C^r$ partially hyperbolic skew products with 2-dimensional center that are center bunched, there exists a $C^1$-dense and $C^r$-open subset of diffeomorphisms verifying the following: if $f$ belongs to this subset, then $\lambda^c_1(f)>0$.
	\end{theorem}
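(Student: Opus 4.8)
The plan is to run the strategy of Theorem~\ref{positive}, but applied to the derivative cocycle $A(x) = Df|_{E^c(x)}$, which takes values in $GL(2,\bR)$ (or in $SL(2,\bR)$ after suitable normalization since we work with skew products and can trivialize the center bundle over the fibers). First I would fix the volume-preserving $C^r$ partially hyperbolic skew product $f$ with $2$-dimensional center bundle, center bunched, and the invariant volume measure $\mu$, which has local product structure because $f$ is a skew product over an Anosov base (the base measure is in the Lebesgue class and Lebesgue has local product structure for Anosov maps). Suppose for contradiction that $\lambda^c_1(f) = 0$; then both center exponents vanish $\mu$-almost everywhere, and Theorem~\ref{IP-nonlinear} (equivalently Theorem~\ref{IP-linear} for the linearized center cocycle, in the fiber-bunched regime guaranteed by center bunching) gives that the relevant $PF_A$-invariant measure projecting to $\mu$ admits an $su$-invariant disintegration along the projective center fibers. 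Then Corollary~\ref{th.product-structure} (using $h^u_\mu = h^s_\mu = h_\mu$, which in the volume-preserving center-bunched accessible setting follows, or using the skew-product product structure directly) upgrades this to a \emph{continuous} $su$-invariant disintegration $x \mapsto m_x$ on $\supp(\mu) = M$, with the equivariance $A(x)_* m_x = m_{f(x)}$ holding \emph{everywhere}.

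The next step is to introduce pinching and twisting for the derivative cocycle. Since we only need a $C^1$-dense, $C^r$-open conclusion, I would first show that the pinching-and-twisting property is $C^r$-open and then that it is $C^1$-dense among volume-preserving center-bunched skew products. Pinching asks for a periodic point $p$ of $f$ (inside a fixed fiber over a periodic point of the Anosov base) such that the return map $Df^{n_p}|_{E^c(p)}$ has two distinct-modulus eigenvalues with eigendirections $e_1, e_2 \in \bP\bR^2$; twisting asks for $z$ in the intersection of the stable and unstable manifolds of $p$ (in the base, lifted appropriately) such that the composed holonomy $H = H^s_{z,p} \circ H^u_{p,z}$ on the projective center fiber moves $\{e_1,e_2\}$ off itself. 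Openness is immediate from the continuous dependence of periodic data, eigenvectors, and center holonomies on $f$ in the $C^r$ topology; density is the delicate part — one perturbs $f$ near $p$ (a local, compactly supported, volume-preserving perturbation twisting the center derivative) to create a pinching periodic point, and then a further volume-preserving perturbation supported near an $su$-loop to adjust the holonomy and achieve twisting, being careful that these perturbations respect the skew-product structure and the center-bunching inequalities.

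Finally, I would derive the contradiction exactly as in the proof following Theorem~\ref{positive}: pinching forces $Df^{n_p}|_{E^c(p)}$ to act on $\bP\bR^2$ as a Morse--Smale (north--south) map with attractor $e_1$ and repeller $e_2$, so invariance $A^{n_p}(p)_* m_p = m_p$ forces $m_p = a\,\delta_{e_1} + b\,\delta_{e_2}$; then the continuous $su$-invariant disintegration gives $H_* m_p = m_p$, hence $H(\{e_1,e_2\}) = \{e_1,e_2\}$, contradicting twisting. Therefore $\lambda^c_1(f) > 0$ for every $f$ in the pinching-and-twisting set, which is $C^r$-open and $C^1$-dense. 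The main obstacle I expect is the density argument: unlike in Theorems~\ref{positive} and~\ref{ph.positive}, where the fiber cocycle $A$ is perturbed independently of the base dynamics $f$, here the base and fiber dynamics are coupled through $Df$, so every perturbation of the center derivative is a genuine perturbation of $f$ itself, and one must simultaneously preserve volume, preserve (or at least not destroy) partial hyperbolicity and center bunching, and keep the perturbation localized enough that the created pinching/twisting data survives; this is why one only obtains $C^1$-density rather than $C^r$-density, and it is the heart of the argument in \cite{OP}.
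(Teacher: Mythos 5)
Your overall strategy is the one the survey attributes to \cite{OP}: treat the center derivative $Df|E^c$ as a fiber-bunched linear cocycle (fiber bunching with the relevant H\"older exponent follows from center bunching and the smoothness of the vertical bundle of a skew product), use the Invariance Principle plus the continuity upgrade to show that zero center exponents force a continuous $su$-invariant disintegration on the projective center bundle, and rule this out by pinching and twisting. But note that the survey itself offers no proof of this theorem, and the part of your plan that you defer --- the $C^1$-density of pinching and twisting among volume-preserving, center-bunched $C^r$ skew products --- is not a technical appendix: it \emph{is} the theorem. Everything else in your outline is machinery already available (Theorem \ref{IP-linear}, Lemma \ref{lem.m+}, the Morse--Smale argument). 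As the survey stresses, for the derivative cocycle the base and fiber dynamics cannot be perturbed independently: a perturbation creating an eigenvalue gap at a periodic point changes $f$ itself, hence changes the stable/unstable foliations and the holonomies $H^{s/u}$ (which depend on the entire forward/backward orbit of $Df|E^c$ along the leaves), so ``a further perturbation near an $su$-loop to achieve twisting'' must be shown not to destroy the pinching just created, to keep the map a skew product, volume-preserving, and center bunched, and to move the composed holonomy $H^s_{z,p}\circ H^u_{p,z}$ in a controlled direction. Without at least a sketch of these perturbation lemmas (localized fiberwise volume-preserving perturbations supported away from the periodic orbit, continuity/derivative formulas for $H^{s/u}$ with respect to the perturbation), the proposal is a roadmap rather than a proof.

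Two further points need patching. First, the reduction ``$\lambda^c_1(f)=0$ implies both center exponents vanish'' and hence that Theorem \ref{IP-linear} applies requires $\lambda^c_1=\lambda^c_2$, i.e.\ that the center cocycle is (conformally) $\SL2$-valued; this holds when the fiber maps preserve area on the two-dimensional fibers, which you should either assume explicitly or derive from the global volume preservation, since for a $\GL$-valued cocycle with $\lambda^c_1=0>\lambda^c_2$ the Invariance Principle does not apply and a separate argument (via the $u$-state $m^+$ as in Lemma \ref{lem.m+} and Proposition \ref{thm.continuity}) is needed. Second, the projective cocycle $PF_A$ here lives over the partially hyperbolic map $f$, not over an Anosov map, so Proposition \ref{prop.product-structure} and Corollary \ref{th.product-structure} as stated do not apply directly to the measure $m$ on $M\times P\bR^2$; you must either regard the whole system as a skew product over the Anosov base $g$ with compact fiber $S\times P\bR^2$ and verify the entropy hypotheses $h^u_m=h^s_m=h_m$ for the lifted measure (which follows from all fiber exponents being zero, but must be argued), or invoke the partially hyperbolic analogue that the survey explicitly flags as additional work (the approach of \cite{notasm} or Theorem D of \cite{ASV}). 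Also, accessibility is not a hypothesis of this theorem, so the parenthetical appeal to the ``accessible setting'' should be dropped.
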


	An analogous result can be concluded using the approach of \cite{ASV} and perturbative techniques.
	
	\begin{theorem}\cite{M, LMY}
		Fix $r\geq 2$. Let $f$ be a $C^r$ partially hyperbolic symplectic diffeomorphism with a 2-dimensional center bundle. Assume that $f$ is accessible, center-bunched, and the set of periodic points is non-empty. Then, $f$ can be $C^r$-approximated by $C^r$-open sets of symplectic diffeomorphisms with non-zero center Lyapunov exponents.
	\end{theorem}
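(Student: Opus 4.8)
The plan is to combine the Invariance Principle for the derivative cocycle $Df|E^c$ with the product-structure upgrade and a twisting-type perturbation, following the template of the symplectic-$C^1$-open/dense results quoted above. First I would set up the relevant objects: since $f$ is $C^r$ partially hyperbolic and center-bunched with $2$-dimensional center, the derivative cocycle $A(x)=Df_x|E^c(x)$ is fiber-bunched (the center-bunching condition $\vartheta<\gamma\widehat\gamma$, $\widehat\vartheta<\gamma\widehat\gamma$ is exactly what gives fiber-bunching for $Df|E^c$), so by the remarks after Theorem~\ref{IP-linear} the projective cocycle $PF_A$ admits stable and unstable holonomies, and these vary continuously with $f$ in the $C^r$ topology. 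I would work with the volume measure $\mu=\vol$, which is $f$-invariant and, by accessibility together with the Anosov-like local product structure of $W^s$, $W^u$ in the base, has local $cs\times u$ product structure (or, in the skew-product case, $\widetilde\mu$ has local product structure in the sense needed for Corollary~\ref{th.product-structure}).

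Next I would argue by contradiction in the spirit of the pinching–twisting proposition: suppose $f$ (in some $C^r$-open set we are trying to shrink) has a zero center exponent, $\lambda^c_1(f)=0$; then since the two center exponents are symmetric up to sign issues — or more precisely, using that $\lambda^c_1(f)\ge \lambda^c_2(f)$ and that a zero largest exponent forces $\lambda_1(A)=\lambda_k(A)$ for the projectivization after passing to the exterior/conformal normalization — Theorem~\ref{IP-linear} (equivalently Theorem~\ref{IP-nonlinear}) yields a $PF_A$-invariant measure $m$ projecting to $\vol$ with $su$-invariant disintegration along the fibers. The local product structure of $\vol$ then lets me invoke the analogue of Proposition~\ref{prop.product-structure} (Corollary~\ref{th.product-structure}, or its partially hyperbolic version from \cite{CP}): the disintegration $x\mapsto m_x$ can be taken \emph{continuous} and $su$-invariant on $\supp(\vol)=M$, and by continuity the equivariance $A(x)_* m_x = m_{f(x)}$ holds for \emph{every} $x\in M$, and every $su$-holonomy preserves the disintegration pointwise.

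Then I would exploit a periodic orbit. Accessibility plus the non-empty periodic set allows us to find, after a $C^r$-small perturbation, a periodic point $p$ of period $n_p$ such that $A^{n_p}(p)=Df^{n_p}_p|E^c(p)$ is hyperbolic on $\mathbb{P}\mathbb{R}^2$ (pinching) — this is the elementary linear-algebra perturbation in the fiber, realizable by a local perturbation of $f$ supported near $p$ since the center is $2$-dimensional and $r\ge 2$ gives enough room; then $m_p=a\delta_{e_1}+b\delta_{e_2}$ for the two eigendirections. A second perturbation along an $su$-loop based at $p$ (which exists by accessibility) produces an $su$-holonomy $H$ of $PF_A$ with $H(\{e_1,e_2\})\cap\{e_1,e_2\}=\emptyset$ — the twisting condition rephrased via $su$-loops, as in \cite{ASV,M,LMY} — contradicting $H_*m_p=m_p$. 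Hence on a $C^r$-open, $C^r$-dense (near the perturbed $f$) set the center exponents are non-zero; openness comes from continuity of holonomies and the fact that the pinching/twisting conditions are open in the $C^r$ topology.

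The main obstacle, and the place where the derivative-cocycle case genuinely differs from the linear-cocycle theorems, is that one \emph{cannot} perturb the fiber dynamics $A=Df|E^c$ independently of the base dynamics $f$: a perturbation of $f$ changes both simultaneously, and it also moves the bundle $E^c$ and hence the holonomies. So the pinching and twisting perturbations must be engineered as honest perturbations of the diffeomorphism — typically localized near $p$ and along the chosen $su$-loop — while controlling that (i) the perturbed map stays partially hyperbolic, center-bunched, accessible and symplectic (for the symplectic statement, the perturbation must be Hamiltonian/symplectic, which constrains how one can twist the center direction), and (ii) the induced change in the center cocycle along the relevant finite orbit segment realizes the desired pinching/twisting. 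Making these two perturbations compatible — and ensuring the twisting one does not destroy the pinching achieved by the first — is the technical heart of the argument, and is exactly what the perturbative machinery of \cite{OP,M,LMY} is built to handle.
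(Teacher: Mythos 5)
Your strategy is the right one, and it is exactly the route the survey points to for this theorem (which it does not prove itself, saying only that the result ``can be concluded using the approach of \cite{ASV} and perturbative techniques''): apply the Invariance Principle to the derivative cocycle $Df|E^c$, upgrade the $su$-invariant disintegration to a continuous one using the product structure of the volume, and derive a contradiction from a pinching periodic point together with a twisting $su$-loop. Two small points of precision are worth fixing. First, the reason the statement reduces cleanly to ``$\lambda^c_1(f)>0$'' is that for a symplectic diffeomorphism the $2$-dimensional center bundle is a symplectic subbundle, so $Df|E^c$ is an area-preserving (hence $\SL2$-conjugate) cocycle and $\lambda^c_1=-\lambda^c_2$ automatically; your phrasing about ``sign issues'' and ``$\lambda_1(A)=\lambda_k(A)$ after passing to the exterior/conformal normalization'' obscures this, and the $\SL2$ structure is also what makes the two-dimensional pinching--twisting dichotomy and the Morse--Smale argument on $\mathbb{P}\mathbb{R}^2$ available.

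Second, and more seriously, your proposal is a strategy outline rather than a proof: the entire content of \cite{M,LMY} lies in the step you explicitly defer, namely constructing \emph{symplectic} $C^r$-small perturbations of $f$ that (a) make $Df^{n_p}|E^c(p)$ hyperbolic at a periodic point and (b) twist the holonomy along an $su$-loop so that no continuous $su$-invariant family of probability measures on the projective center bundle can exist, while simultaneously keeping partial hyperbolicity, center-bunching and (stable) accessibility, and without the second perturbation undoing the first. Saying this is ``the elementary linear-algebra perturbation in the fiber'' understates the difficulty: because base and fiber are coupled, the perturbation moves $E^c$, the holonomies, and the $su$-loop itself, and the symplectic constraint restricts which fiber rotations are realizable. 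In addition, the $C^r$-\emph{openness} of the conclusion is not merely ``pinching/twisting are open conditions''; one must know that the obstruction (non-existence of a continuous $su$-invariant section) is robust under $C^r$-perturbation of $f$, which requires continuity of the center holonomies with respect to the diffeomorphism in the relevant topology. As a blind reconstruction of the argument's architecture your proposal is accurate and consistent with the survey's indication, but it does not constitute a proof of the theorem without supplying this perturbative machinery.
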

	
	\subsubsection*{Random dynamics}
	
	For a random product of diffeomorphisms, we mention the following result, which also uses the Invariance Principle as part of the argument.
	
	\begin{theorem}\cite{Barrientos-Malicet}
		Given $r\geq 1$ and $k\geq 2$, consider conservative $C^r$-diffeomorphisms $f_1,\dots, f_{k-1}$ of $M$
		such that the Lebesgue measure is ergodic for the group generated by these maps. Then there is a
		$C^1$-open and $C^r$-dense set of conservative $C^r$-diffeomorphisms such that for any $f_k$ in this set, the Lyapunov exponents of the random product of $f_1,\dots, f_k$ have non-zero exponents.
	\end{theorem}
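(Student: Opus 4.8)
The plan is to reduce the statement to the previously established Invariance Principle for random dynamical systems (the theorem in Section~\ref{sec.Furstenberg}) combined with a perturbative argument, exactly as in the positivity results of Section~\ref{s.applications}. First I would set up the random product: given conservative $C^r$-diffeomorphisms $f_1,\dots,f_{k-1}$, consider the random dynamical system whose driving measure is $\nu=\frac{1}{k}\sum_{i=1}^{k}\delta_{f_i}$, where $f_k$ is to be chosen. Since each $f_i$ preserves Lebesgue measure $\vol$, the measure $\vol$ is $\nu$-stationary, so $\nu^{\bN}\times\vol$ is $F$-invariant. The derivative cocycle $A(\omega,x)=Df_0(x)$ over $F$ then has a well-defined top and bottom Lyapunov exponent with respect to $\nu^{\bN}\times\vol$, and the ergodicity hypothesis on the group action guarantees these exponents are constants $\lambda_1\geq\cdots\geq\lambda_{\dim M}$. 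By conservativity the sum of all exponents is zero, so either all exponents vanish, or at least one is strictly positive and at least one strictly negative.

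Next I would argue by contradiction: suppose that for $f_k$ in some residual/open set of conservative $C^r$-diffeomorphisms the Lyapunov exponents all vanish. Then by the Invariance Principle for random dynamics (the final theorem of Section~\ref{sec.Furstenberg}, applied to the projectivized derivative cocycle on the flag bundle, or inductively exponent by exponent), the relevant stationary measures on the Grassmannian or projective bundles are invariant under each $f_i$ individually — the Furstenberg entropy vanishes, forcing $f_{i*}\eta=\eta$ for $\nu$-almost every $f_i$, for the disintegrated measures along the fibers. In particular the fiberwise disintegration of the stationary measure is simultaneously invariant under the holonomies generated by all the $f_i$'s, which in the setting of a Lebesgue-ergodic group action is an extremely rigid condition: the measurable field of measures on projective space must be invariant under the full group generated by $f_1,\dots,f_k$, hence (by ergodicity of Lebesgue for that group) essentially constant along orbits.

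The heart of the argument, and the step I expect to be the main obstacle, is then a $C^1$-dense, $C^r$-open perturbation lemma: one must exhibit, arbitrarily $C^1$-close to any given $f_k$, a conservative diffeomorphism for which this rigidity fails. Concretely, one chooses a periodic orbit $p$ of the skew product over a finite word, perturbs $f_k$ locally near a point of that orbit (using a conservative local perturbation of Oxtoby--Ulam / pasting type) so that the return derivative cocycle around $p$ becomes pinching — has simple spectrum — and then perturbs again near an $su$-connection to make it twisting, so that no finitely-supported measure on the projective fiber over $p$ can be invariant under both the pinching holonomy and the twisting holonomy. This contradicts the continuity and $su$-invariance of the disintegration (Corollary~\ref{th.product-structure} in its random-dynamics incarnation), and the openness of the pinching--twisting conditions gives the $C^r$-open, $C^1$-dense set. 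The delicate point is carrying out these two perturbations in the \emph{conservative} category while keeping control of the holonomies — this is where the work of \cite{Barrientos-Malicet} concentrates, and I would cite their construction for the precise local conservative perturbation rather than reproduce it.

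\begin{proof}[Proof sketch]
Consider the random dynamical system with driving measure $\nu=\frac1k\sum_{i=1}^k\delta_{f_i}$; since each $f_i$ is conservative, $\vol$ is $\nu$-stationary and the derivative cocycle over $F$ has constant Lyapunov exponents summing to zero. If all of them vanished for a $C^r$-open set of choices of $f_k$, the Invariance Principle of Section~\ref{sec.Furstenberg} (applied along the flag) would force the fiberwise disintegrations of the stationary measures on the associated projective bundles to be invariant under every $f_i$, hence continuous and rigid by the product-structure argument of Corollary~\ref{th.product-structure}. A $C^1$-dense, $C^r$-open conservative perturbation near a periodic orbit makes the return cocycle pinching and twisting, contradicting this rigidity; the required local conservative perturbations are those constructed in \cite{Barrientos-Malicet}. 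Hence on a $C^1$-dense, $C^r$-open set of $f_k$ the random product has a non-zero Lyapunov exponent.
\end{proof}
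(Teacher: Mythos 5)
The paper does not prove this statement at all: it is quoted from \cite{Barrientos-Malicet} with only the one-line remark that the Invariance Principle ``is part of the argument.'' So there is no proof in the text to compare yours against, and your sketch does correctly identify the intended high-level strategy: if all exponents of the random product vanished, the Invariance Principle of Section~\ref{sec.Furstenberg} (applied to the projectivized derivative cocycle) would produce a measurable family $x\mapsto m_x$ of probability measures on the projective fibers satisfying $(Df_i)_*m_x=m_{f_i(x)}$ for every $i$, a rigidity which is then destroyed by a conservative local perturbation of $f_k$; ergodicity of Lebesgue for the group generated by $f_1,\dots,f_{k-1}$ is what lets a local perturbation yield a global contradiction.

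That said, as a proof your proposal has two genuine problems. First, it is circular where it matters most: the $C^1$-small, volume-preserving perturbation that breaks the rigidity is exactly the content of \cite{Barrientos-Malicet}, and you explicitly defer it to that reference, so nothing beyond the reduction is actually established. Second, the rigidity-breaking mechanism you describe --- a periodic orbit of the skew product, an $su$-connection, and the pinching/twisting criterion with holonomies $H^{s/u}$ as in the Anosov-cocycle part of Section~\ref{s.applications} --- is transplanted from the wrong setting. Over a Bernoulli base the random product carries no geometric stable/unstable holonomies acting on the fiber $M$; the $s$- and $u$-invariance furnished by the Invariance Principle here mean only that the disintegration is independent of the past and future of $\omega$, and Corollary~\ref{th.product-structure} is not available in the form you invoke. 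The condition to be violated is the simultaneous equivariance $(Df_i)_*m_x=m_{f_i(x)}$ for all $i$ on a Lebesgue-full set, and the perturbation argument must be built directly around that (realizing a suitable volume-preserving linear map as a local derivative of $f_k$), not around pinching/twisting of holonomy loops. The openness part of the conclusion also needs an argument tailored to that criterion rather than to the fiber-bunched cocycle machinery.
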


	\subsection{Rigidity of measures}
	The Invariance Principle is very useful to prove some rigidity results for systems that have measures with zero center Lyapunov exponents. We mention some of these applications.
	
	\subsubsection*{Skew products of rotations}
	Let $r>1$ and $g\colon N\to N$ be a diffeomorphism such that $\Lambda\subset N$ is a $g$-invariant compact hyperbolic set with local product structure. Let $\theta\colon N\to \mathbb{R}$ be a $C^r$ function.
	
	We define a skew product $f\colon N\times S^1\to N\times S^1$ to be a $C^r$, skew product that projects on $g$, in other words $f(x,t)=(g(x),t+\theta(x))$.
	
	Let $x,y\in \Lambda$ be in the same stable manifold. Using an identity analogous to Equation \eqref{eq.holonomy} for the stable holonomy, it is easy to see that the stable holonomy is defined as,
	\begin{equation}\label{eq.holonomy.rot}
		h^s_{x,y}(t)=t+\theta^s_{x,y},\,\text{ where }\theta^s_{x,y}=\sum_{k=0}^\infty \theta(f^k(x))-\theta(f^k(y)).
	\end{equation}
	
	Let $\tilde{\mu}$ be a $g$-invariant measure such that $\supp(\tilde{\mu})=\Lambda$ and $\tilde{\mu}$ has local product structure. If $\pi\colon N\times S^1\to N$ is the projection on the first coordinate, we want to answer the question of how many $f$-invariant measures $\mu$ such that $\pi_*\mu=\tilde{\mu}$ exist?
	
	As before, we call $\gamma=(x_0,x_1,x_2,\dots,x_k)$ an $su$-path if $x_i\in \Lambda$ and $x_{i+1}\in \Lambda$ are in the same stable or unstable manifold.
	For each $\gamma$ we define
	$$
	H_\gamma=  h^{\sigma_k}_{x_{k-1},x_k}\dots\circ h^{\sigma_1}_{x_0,x_1},
	$$
	where $\sigma_i$ is such that $x_{i-1}\in W^{\sigma_i}_g(x_i)$. By Equation \eqref{eq.holonomy.rot}, we know,
	$$
	H_\gamma(t)=t+\theta_\gamma \text{ where }\theta_\gamma= \sum_{i=1}^k \theta^{\sigma_i}_{x_{i-1},x_i}.
	$$
	
	Fix $z\in \Lambda$, we say that an $su$-path is an $su$-loop over $z$ if $x_0=x_k=z$. Define
	$$
	G_z=\{\theta_\gamma:\theta\text{ a }su\text{-loop over }z\}.
	$$
	Observe that $G_z$ is a group. Moreover, this group does not depend on $z$, we denote it by $G$.
	
	\begin{proposition}\label{prop.group.G}
		Let $m$ be the Lebesgue measure on $S^1$ and $\mu$ be an $f$-invariant measure such that $\pi_*\mu=\tilde{\mu}$.
		Either,
		\begin{itemize}
			\item $\mu=\tilde{\mu}\times m$, or
			\item There exists $n,\ell\in \mathbb{N}$ such that $G=\{\frac{1}{n},\dots, \frac{n-1}{n}\}$, and for every $p=f^{n_p}(p)$,
			\[\sum_{j=0}^{n_p-1}\theta_{f^j(p)}=1\mod \frac{1}{\ell}.\]
		\end{itemize}
	\end{proposition}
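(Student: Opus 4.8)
The plan is to apply the Invariance Principle for skew products (the version in Corollary~\ref{th.product-structure} or its analog in the present setting) to deduce that, unless $\mu = \tilde\mu \times m$, the center disintegration $x \mapsto \mu^c_x$ is continuous and $su$-invariant on $\supp(\tilde\mu) = \Lambda$, and then to extract the arithmetic rigidity from the concrete description of the holonomies in Equation~\eqref{eq.holonomy.rot}. First I would observe that since the fiber dynamics are isometries (rotations of $S^1$), the center Lyapunov exponent of $f$ vanishes identically, so the hypothesis $h^u_\mu(f) = h^s_\mu(f) = h_\mu(f)$ needed to invoke the Invariance Principle holds automatically (more precisely: the entropy of the fiber dynamics is zero because it acts by isometries, so all the entropy of $f$ is carried by the base, hence by the unstable and stable foliations as in the discussion after Definition~\ref{foliation.entropy}). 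Combined with the local product structure of $\tilde\mu$, Corollary~\ref{th.product-structure} yields a continuous $su$-invariant center disintegration $x \mapsto \mu^c_x$ defined on all of $\Lambda$.

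Next I would analyze what $su$-invariance says concretely. By Equation~\eqref{eq.holonomy.rot}, along any $su$-loop $\gamma$ over a fixed $z \in \Lambda$ the composed holonomy $H_\gamma$ is the rotation $t \mapsto t + \theta_\gamma$, and $su$-invariance forces $(H_\gamma)_* \mu^c_z = \mu^c_z$. Therefore $\mu^c_z$ is invariant under every rotation by an element of the group $G = \{\theta_\gamma : \gamma \text{ an } su\text{-loop over } z\} \subset S^1$. A probability measure on $S^1$ invariant under a subgroup $G$ of rotations is either invariant under the closure $\overline{G}$: if $\overline{G} = S^1$ then $\mu^c_z = m$ is Lebesgue; by continuity and $su$-invariance $\mu^c_x$ is then Lebesgue for every $x \in \Lambda$ (any two points of $\Lambda$ are joined by an $su$-path, and rotations preserve Lebesgue measure), so $\mu = \tilde\mu \times m$, the first alternative. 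Otherwise $\overline{G}$ is a finite cyclic group, say of order $n$, i.e.\ $G = \{\tfrac{1}{n}, \dots, \tfrac{n-1}{n}\}$ (here one should note that $G$ is already closed, being generated by differences of the finitely-many "periodic" contributions, or argue that a countable subgroup of $S^1$ that is not dense is finite cyclic), and $\mu^c_z$ is a measure invariant under rotation by $1/n$.

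Finally I would pin down the congruence condition on periodic orbits. Given $p = f^{n_p}(p)$, the quantity $\Theta_p := \sum_{j=0}^{n_p-1}\theta_{f^j(p)}$ is the total rotation of the return map $f^{n_p}$ restricted to the fiber over $p$; invariance of $\mu^c_p$ under $f$ (hence under $f^{n_p}$) together with $\mu^c_p$ being $\tfrac1n$-periodic forces $\mu^c_p$ to be supported on a single orbit of the group generated by $\tfrac1n$ and $\Theta_p$, and for $\mu^c_p$ to remain a finite combination of atoms fixed up to the $su$-invariance constraint this orbit must be finite, which is exactly the statement $\Theta_p \equiv 1 \bmod \tfrac{1}{\ell}$ for some $\ell \in \mathbb{N}$ (one gets $\ell \mid n$ or a compatible relation from combining the two invariances). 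The main obstacle I anticipate is this last step: carefully reconciling the three constraints on $\mu^c_p$ — periodicity under the group $G$, invariance under the return rotation $\Theta_p$, and the requirement that a \emph{single} $\ell$ works uniformly over all periodic $p$ — and showing these cannot be satisfied by a genuinely "spread out" measure without collapsing either to Lebesgue or to the rigid arithmetic picture. Verifying that $G$ is genuinely independent of the basepoint $z$ (already asserted in the excerpt) and that it is closed rather than merely a dense-or-finite dichotomy also needs a short argument using that the periodic data generate the holonomy group, which I would handle via a standard shadowing/closing-lemma argument on the hyperbolic set $\Lambda$.
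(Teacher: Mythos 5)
Your first two steps coincide with the paper's proof: invoke the Invariance Principle (Corollary~\ref{th.product-structure}) to get a continuous $su$-invariant disintegration on $\Lambda$, observe that $(R_\theta)_*\mu^c_x=\mu^c_x$ for all $\theta\in\overline{G}$, and split according to whether $\overline{G}=S^1$ (giving $\mu=\tilde\mu\times m$) or $\overline{G}$ is finite. Up to that point the proposal is fine.

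The gap is in the final step concerning periodic points, and it is twofold. First, your mechanism for the rationality of $\Theta_p=\sum_{j=0}^{n_p-1}\theta_{f^j(p)}$ is not valid as stated: $su$-invariance does not force $\mu^c_p$ to be ``supported on a single orbit'' or to be ``a finite combination of atoms'' --- a non-Lebesgue measure invariant under a finite rotation group can be arbitrarily spread out. The correct and much simpler argument is unique ergodicity of irrational rotations: since $f$-invariance of the disintegration gives $(R_{\Theta_p})_*\mu^c_p=\mu^c_p$, and since in the second alternative one first checks that $\mu^c_z\neq m$ for \emph{every} $z\in\Lambda$ (otherwise holonomy invariance and continuity propagate $\mu^c_z=m$ to all of $\Lambda$, landing back in the first alternative), $\Theta_p$ cannot be irrational, hence $\Theta_p=m_p/\ell_p$. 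You never isolate the observation that $\mu^c_p\neq m$ for all $p$, which is what makes this work. Second, you explicitly leave the uniformity of $\ell$ over all periodic points as an ``anticipated obstacle'' rather than resolving it; this is precisely where the continuity of the disintegration is used again. The paper's argument: if there were periodic points $p_j\to p$ with $\ell_{p_j}\to\infty$, pick an irrational $\theta$ and rationals $q_j=m_j'/\ell_{p_j}\to\theta$; each $q_j$ lies in the group of rotations preserving $\mu^c_{p_j}$, so passing to the limit using continuity of $x\mapsto\mu^c_x$ yields $(R_\theta)_*\mu^c_p=\mu^c_p$ with $\theta$ irrational, forcing $\mu^c_p=m$ and contradicting the previous observation. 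Without these two ingredients the second alternative of the proposition is not established.
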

	
	\begin{proof}
		Any measure $\mu$ has zero Lyapunov exponent along the fiber. Then by Corollary~\ref{th.product-structure}, the disintegration along the fibers $\Lambda\ni x\mapsto \mu_x$ has a continuous extension to $\Lambda$ that is $su$-invariant.
		
		For each $\theta$, let $R_\theta$ be the rotation defined by $R_{\theta}(t)=t+\theta$. Then, $(R_\theta)_* \mu_x =\mu_x$ for every $\theta\in G_x$. Since this condition is closed in $\theta$, we know that it is satisfied for every $\theta\in \overline{G_x}$.
		
		The first part of the proposition follows from the fact that $\overline{G_x}$ is either $S^1$ or finite.
		
		Suppose the first case does not happen. Observe that for every $z\in \Lambda$, $\mu_z\neq m$. If this were not the case, then, by the holonomy invariance, $\mu_x=m$ for every $x\in \Lambda$.
		
		Now take $p\in\Lambda$ such that $f^{n_p}(p)=p$, let $\theta^{n_p}_p=\sum_{j=0}^{n_p-1}\theta_{f^j(p)}$. Since $(R_{\theta^{n_p}_p})_*\mu_p=\mu_p$, we conclude that $\theta^{n_p}_p=\frac{m_p}{\ell_p}$ for some $\ell_p,m_p\in \mathbb{N}$.
		
		We need to prove that $\ell_p$ is uniformly bounded. Assume there exists $p_j\to p$ periodic point such that $\ell_{p_j}\to \infty$. Take $\theta \in \mathbb{R}\setminus \mathbb{Q}$ and $q_j=\frac{m'_j}{\ell_{p_j}}\to \theta$. By the invariance, we know $(R_{q_j})_* \mu_{p_j}=\mu_{p_j}$. Letting $j\to \infty$ and using that $x\mapsto \mu_x$ is continuous we conclude that
		$ (R_\theta)_* \mu_p=\mu_p$. This implies that $\mu_p=m$, which is a contradiction.
	\end{proof}
	
	\begin{theorem}
		Let $\mu$ be an $f$-invariant measure that satisfies $\pi_*\mu=\tilde{\mu}$. Then, either $\mu=\tilde{\mu}\times m$, or there exists $n\in \mathbb{N}$ such that $\theta_x\mod \frac{1}{n}$ is cohomologous to a rational constant when restricted to $\Lambda$.
	\end{theorem}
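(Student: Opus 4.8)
The plan is to combine Proposition~\ref{prop.group.G} with the Liv\v{s}ic periodic orbit theorem. Proposition~\ref{prop.group.G} gives a dichotomy: either $\mu=\tilde{\mu}\times m$, in which case the first alternative of the theorem holds and there is nothing to prove, or there are $n_0,\ell\in\mathbb{N}$ with $G=\{\tfrac1{n_0},\dots,\tfrac{n_0-1}{n_0}\}$ and such that for every $g$-periodic point $p\in\Lambda$, of period $n_p$, the Birkhoff sum
$$
\theta^{n_p}_p\;=\;\sum_{j=0}^{n_p-1}\theta(g^{j}p)
$$
lies in $\tfrac1\ell\mathbb{Z}$, i.e.\ $\theta^{n_p}_p\equiv 0\pmod{\tfrac1\ell}$ for every periodic orbit; here I read $\theta^{n_p}_p$ as the constant rotation amount of $f^{n_p}$ on the fibre over $p$, cf.\ the formula for the holonomies in \eqref{eq.holonomy.rot}. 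I will work in this second case and take the integer in the statement to be $n:=\ell$.

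The idea is then to regard $\theta$ reduced modulo $\tfrac1n$ as a cocycle
$$
\bar\theta\colon\Lambda\longrightarrow\mathbb{R}/\tfrac1n\mathbb{Z},\qquad \bar\theta(x)=\theta(x)\bmod\tfrac1n,
$$
over the hyperbolic system $g|_\Lambda$. Since $\theta$ is $C^r$ with $r>1$, $\bar\theta$ is H\"older, and by the previous paragraph its Birkhoff sum over every periodic orbit is trivial in $\mathbb{R}/\tfrac1n\mathbb{Z}$. Because $\Lambda$ is a hyperbolic set with local product structure (so that the Anosov closing lemma is available and periodic orbits are dense), the Liv\v{s}ic theorem for H\"older cocycles with values in the compact abelian group $\mathbb{R}/\tfrac1n\mathbb{Z}$ applies and produces a H\"older transfer map $u\colon\Lambda\to\mathbb{R}/\tfrac1n\mathbb{Z}$ with
$$
\theta(x)\;\equiv\;u(g(x))-u(x)\pmod{\tfrac1n}\qquad\text{for all }x\in\Lambda.
$$
Thus $\theta\bmod\tfrac1n$ is cohomologous to the constant $0$, a rational constant, which is precisely the second alternative of the theorem. (If the periodic data were instead of the form $\theta^{n_p}_p\equiv c\,n_p\pmod{\tfrac1n}$ for some fixed rational $c$, one would apply the same theorem to $\theta-c$ and obtain cohomology to the rational constant $c$; so the argument is robust to that variant.)

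The substantive content — the \emph{uniformity} of the denominator $\ell$ along all periodic orbits — is already packaged inside Proposition~\ref{prop.group.G}, whose proof is where the continuous $su$-invariant center disintegration coming from Corollary~\ref{th.product-structure} is exploited, via the compactness argument that forbids $\ell_{p_j}\to\infty$. What is left here is essentially bookkeeping plus the appeal to Liv\v{s}ic, and the only points deserving care are: that the version of Liv\v{s}ic invoked is the one giving a \emph{continuous} (indeed H\"older) transfer function $u$, which is standard and follows from the usual synchronization of $u$ along stable and unstable manifolds; and that $g|_\Lambda$ may be assumed topologically transitive (equivalently, one works on a single basic piece of $\Lambda$), which is harmless under the standing hypotheses on $\tilde\mu$. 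One could also bypass the citation of Liv\v{s}ic and argue directly: continuity and $f$-invariance give $(R_{\theta(x)})_*\mu_x=\mu_{g(x)}$ for every $x\in\Lambda$, and rigidifying the continuous family $\{\mu_x\}$ along the stable and unstable manifolds (using the closing lemma) reconstructs $u$; but this merely re-proves the relevant compact-group Liv\v{s}ic statement, so quoting it is cleaner. I therefore expect no serious obstacle beyond what Proposition~\ref{prop.group.G} already resolves.
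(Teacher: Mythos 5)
Your proof is correct, but it takes a genuinely different route from the paper's. The paper does not invoke Liv\v{s}ic at all: it builds the transfer function by hand, setting $\varphi(x)=\theta_\gamma \bmod \tfrac1n$ for an $su$-path $\gamma$ from a fixed point $p\in\Lambda$ to $x$. Well-definedness uses the \emph{group} part of Proposition~\ref{prop.group.G} ($G=\{\tfrac1n,\dots,\tfrac{n-1}{n}\}$, so two choices of path differ by an element of $G$, hence by $0$ modulo $\tfrac1n$), the coboundary identity $\theta'(x)=\varphi(g(x))-\varphi(x)+\theta'(p)$ then drops out of the holonomy formula \eqref{eq.holonomy.rot}, and the rationality of the constant $\theta'(p)$ comes from the periodic-orbit part of the proposition. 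You instead use \emph{only} the periodic-orbit part (Birkhoff sums over periodic orbits lie in $\tfrac1\ell\mathbb{Z}$) and outsource the construction of the transfer map to the Liv\v{s}ic theorem for H\"older cocycles with values in $\mathbb{R}/\tfrac1\ell\mathbb{Z}$. Both arguments are legitimate and of comparable depth (Liv\v{s}ic's proof propagates the transfer function along stable and unstable leaves, which is morally what the paper's $su$-path construction does directly). The trade-offs: the paper's route implicitly needs a fixed point in $\Lambda$ and $su$-accessibility of $\Lambda$ from it, while yours needs transitivity of $g|_\Lambda$ and density of periodic orbits, which you correctly flag and which amount to working on a basic piece of the spectral decomposition; in exchange you get an explicitly H\"older transfer function and the cleaner constant $0$. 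Your parenthetical about periodic data of the form $c\,n_p$ is unnecessary, since Proposition~\ref{prop.group.G} already gives exactly the vanishing of the obstruction modulo $\tfrac1\ell$.
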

	\begin{proof}
		Assume that $\mu$ is not equal to $\tilde{\mu}\times m$. Let $S:=S^1\mod \frac{1}{n}$ and define $\theta'\colon \Lambda\to S$ by $\theta'(x)=\theta_x\mod \frac{1}{n}$.
		
		Fix $p\in \Lambda$, a fixed point, and define $\varphi\colon \Lambda \to S$ as $\varphi(x)=\theta_\gamma$ where $\gamma$ is an $su$-path from $p$ to $x$. This map is well-defined because for any other $su$-path $\gamma'$, $\theta_\gamma-\theta_{\gamma'}=\theta_{\gamma\wedge -\gamma'}\in G_p$, where  $\gamma\wedge -\gamma'$ is the $su$-loop concatenating $\gamma$ with $\gamma'$ in the reverse order. Then, by Proposition~\ref{prop.group.G}, $\theta_\gamma-\theta_{\gamma'}\mod \frac{1}{n}=0$.
		
		By the definition of the holonomies, $\theta_x=\theta_{f(\gamma)}-\theta_\gamma+\theta_p$ where $\gamma$ is an $su$-path from $p$ to $x$. Taking the equality mod $\frac{1}{n}$, we conclude that  $\theta'(x)=\varphi(f(x))-\varphi(x)+\theta'(p)$.
		Again by Proposition~\ref{prop.group.G}, $\theta'(p)\in \mathbb{Q}$.
	\end{proof}
	
	\subsubsection*{Accessibility implies smoothness}
	
	In the following, we consider a more general setting.
	
	\begin{theorem}\label{thm.lebesgue}
		Let $f\colon M\to M$ be a $C^r$, $r>1$, partially hyperbolic center-bunched and accessible diffeomorphism which is dynamical coherent and acts quasi-isometric along the center.
		
		If $\mu$ is an ergodic $f$-invariant measure, fully supported with $cs\times u$ product structure and all center Lyapunov exponents equal to zero, then $\mu^c_x$ is absolutely continuous with respect to volume along $W^c(x)$.
	\end{theorem}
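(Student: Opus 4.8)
The plan is to put ourselves in a position to apply the Invariance Principle, upgrade the resulting invariance to a continuous one, and then convert accessibility together with the neutrality of the center dynamics into absolute continuity of the leafwise conditionals. First I would check the entropy hypotheses. Since $\mu$ is ergodic and all its center Lyapunov exponents vanish, every nonzero Lyapunov exponent of $f$ outside $E^u$ is negative; hence $\mu$-almost everywhere the unstable Pesin manifold coincides with $W^u$, and the Ledrappier--Young formula, together with the description of $h^u_\mu(f)$ as the entropy along $W^u$ recalled after Definition~\ref{foliation.entropy}, gives $h^u_\mu(f)=h_\mu(f)$. The center exponents of $f^{-1}$ are also all zero, so the same argument applied to $f^{-1}$ yields $h^s_\mu(f)=h^u_\mu(f^{-1})=h_\mu(f^{-1})=h_\mu(f)$. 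Thus $h^u_\mu(f)=h^s_\mu(f)=h_\mu(f)$, and Theorem~\ref{main} (with its $f^{-1}$ counterpart) shows that the center disintegration of $\mu$ is both $u$-invariant and $s$-invariant.

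\emph{Continuous extension.} Since $\mu$ has $cs\times u$ product structure and is fully supported, I would invoke the partially hyperbolic analog of Corollary~\ref{th.product-structure} (the last corollary of Section~\ref{sec.IP}, whose proof follows the scheme of Proposition~\ref{prop.product-structure}) to obtain a center disintegration $M\ni x\mapsto \mu^c_x$ that is continuous on $\supp(\mu)=M$, $su$-invariant, and satisfies $f_*\mu^c_x\propto\mu^c_{f(x)}$. Because $f$ is $C^r$ with $r>1$ and center-bunched, all stable and unstable holonomies between center plaques are $C^1$; in particular these holonomies, and each restriction $f^n\colon W^c_{loc}(x)\to W^c_{loc}(f^n(x))$, are absolutely continuous for the leafwise volumes $\vol^c$.

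\emph{Isolating the singular part.} Next I would take the leafwise Lebesgue decomposition $\mu^c_x=\mu^{c,\mathrm{ac}}_x+\mu^{c,\mathrm{sing}}_x$ relative to $\vol^c_x$. Absolute continuity of $f$ and of the holonomies, together with uniqueness of the Lebesgue decomposition, implies that $\{\mu^{c,\mathrm{ac}}_x\}$ and $\{\mu^{c,\mathrm{sing}}_x\}$ are each separately $f$-equivariant and $su$-invariant; normalizing $\mu^c_x$ to a probability, the relative mass $x\mapsto\mu^{c,\mathrm{sing}}_x(W^c(x))$ is then $f$-invariant, hence constant equal to some $b\in[0,1]$ by ergodicity. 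If $b=0$ we are done, so suppose $b>0$; feeding the normalized singular part into the extension construction of Proposition~\ref{prop.product-structure} shows that $x\mapsto\nu_x:=\mu^{c,\mathrm{sing}}_x/b$ may be taken continuous on $M$, $su$-invariant, $f$-equivariant, with every $\nu_x$ a probability on $W^c(x)$ singular with respect to $\vol^c_x$.

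\emph{The main obstacle.} The hard part will be to rule out $b>0$. By accessibility the continuous family $\{\nu_x\}$ is invariant under the pseudogroup $\mathcal{H}$ of germs of center-leaf maps generated by all local stable and unstable holonomies, and $\mathcal{H}$ acts transitively, with absolutely continuous generators depending continuously on the $su$-path defining them. The vanishing of all center exponents and the quasi-isometric behavior along the center make the center dynamics neutral: there is no hyperbolicity to collapse an invariant family onto a thin set, and $f^n|_{W^c_{loc}}$ has controlled (subexponential) distortion along $\mu$-generic orbits. What is needed is a measure-rigidity statement --- a continuous, $\mathcal{H}$-invariant, $f$-equivariant family of probability measures on the center leaves of such a system must lie in the leafwise Lebesgue class. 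In the model case of a one-dimensional center with rotation holonomies this is just the fact that Lebesgue is the only rotation-invariant probability on the circle; in general the point is to show that the $\mathcal{H}$-invariant Lebesgue decomposition of $\nu_x$ cannot have a nontrivial singular part, combining transitivity of $\mathcal{H}$, the regularity and absolute continuity of its generators, and neutrality of the center dynamics. Since $\nu_x\perp\vol^c_x$, this forces $b=0$, and therefore $\mu^c_x\ll\vol^c_x$ along $W^c(x)$. Converting the global, topological accessibility hypothesis into this local regularity statement for the conditional measures is where I expect the real work to lie.
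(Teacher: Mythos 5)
Your first two steps (deriving $h^u_\mu(f)=h^s_\mu(f)=h_\mu(f)$ from the vanishing of the center exponents via Ledrappier--Young, then upgrading to a continuous $su$-invariant center disintegration via the $cs\times u$ product structure) match the paper's route and even fill in a detail the paper leaves implicit. But the proof then stops short of the actual argument: you reduce to ruling out a nontrivial singular part $\nu_x\perp\vol^c_x$ and declare that what is needed is an unproven ``measure-rigidity statement'' for the holonomy pseudogroup, invoking the neutrality of the center dynamics and distortion control of $f^n|_{W^c}$. That is a genuine gap --- and also a misdirection, since the paper's mechanism uses no dynamics along the center at all (and does not need the $f$-invariance/ergodicity bookkeeping of your ``isolating the singular part'' step).

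The missing step is a direct density comparison. Write $\mu^c_x=\varphi\,\vol^c+\vol^c_\perp$; the Radon--Nikodym derivative $\tfrac{d\mu^c}{d\vol^c}$ is finite at $\vol^c$-a.e.\ point of $W^c(x)$ and infinite at $\vol^c_\perp$-a.e.\ point. Given any two points $y,z\in W^c(x)$, accessibility provides an $su$-path from $z$ to $y$, whose composed holonomy $h_\gamma$ maps a center plaque around $z$ to one around $y$ with $h_\gamma(z)=y$. By the (now continuous, hence everywhere valid) $su$-invariance, $h_\gamma$ carries $\mu^c_x$ to a constant multiple of itself, and by center bunching $h_\gamma$ is Lipschitz, so $\vol^c(h_\gamma(B^c_\varepsilon(z)))/\vol^c(B^c_\varepsilon(z))$ is bounded above and below. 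Comparing the ratios $\mu^c(B^c_\varepsilon(z))/\vol^c(B^c_\varepsilon(z))$ and $\mu^c(h_\gamma(B^c_\varepsilon(z)))/\vol^c(h_\gamma(B^c_\varepsilon(z)))$ and letting $\varepsilon\to0$ yields $K^{-1}\tfrac{d\mu^c}{d\vol^c}(y)\le\tfrac{d\mu^c}{d\vol^c}(z)\le K\tfrac{d\mu^c}{d\vol^c}(y)$. Hence no point of the leaf can have infinite density, so $\vol^c_\perp=0$ (and in fact the density is pinched between $0$ and $\infty$). Note this only requires comparing one ``bad'' point with one $\vol^c$-generic point, so no uniformity of $K$ over pairs is needed; your worry about converting global accessibility into local regularity dissolves once you see that the comparison is pointwise.
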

	\begin{proof}
		By the hypothesis, we can apply Corollary~\ref{th.product-structure}.
		Recall that $\mu^c_x$ is well-defined up to a constant, fix one. Therefore, $\mu^c_x=\varphi \vol^c + \vol^c_\perp$, where $\vol^c$ is the volume measure along $W^c$, $\varphi$ is a local $L^1$ function and $\vol^c_\perp$ is singular with respect to $\vol^c$.
		
		Recall also that for $\vol^c_\perp$-almost every $z\in W^c(x)$, $\frac{d\mu^c}{d\vol^c}(z)=\infty$, and for $\vol^c$-almost every $y\in W^c(x)$, $\frac{d\mu^c}{d\vol^c}(y)<\infty$.
		
		Take $y,z\in W^c(x)$, by accessibility, there exists an $su$-path, $\gamma$, from $z$ to $y$. This defines a composition of holonomies $h_\gamma\colon B^c_\delta(z)\to W^c(x)$ such that $h_\gamma(z)=y$. By the $su$-invariance of $\mu^c_x$ and the absolute continuity of the stable and unstable holonomies \cite{PSW}, there exists $K\geq 1$ such that
		$$
		K^{-1}\frac{\mu^c(h(B^c_\varepsilon(z)))}{\vol^c(h(B^c_\varepsilon(z))) }\leq\frac{\mu^c(B^c_\varepsilon(z))}{\vol^c(B^c_\varepsilon(z)) }   \leq K \frac{\mu^c(h(B^c_\varepsilon(z)))}{\vol^c(h(B^c_\varepsilon(z))) }.
		$$
		Since $f$ is center-bunched, we know that the holonomies are Lipchitz, so making $\epsilon\to 0$, we conclude that,
		$$
		K^{-1}\frac{d\mu^c}{d\vol^c}(y)\leq\frac{d\mu^c}{d\vol^c}(z)   \leq K \frac{d\mu^c}{d\vol^c}(y).
		$$
		In particular, there cannot exist $z$ such that $\frac{d\mu^c}{d\vol^c}(z)=\infty$, because it contradicts the condition $\frac{d\mu^c}{d\vol^c}(y)<\infty$ for $\vol^c$-almost every $y\in W^c(x)$. This implies that $\vol^c_\perp=0$. Moreover, if there exists $y$ such that $\frac{d\mu^c}{d\vol^c}(y)=0$, it implies that $\mu^c=0$ which is a contradiction. Then, $0<\frac{d\mu^c}{d\vol^c}(z)<\infty$.
	\end{proof}

	\subsubsection*{Physical measures}
	
	Given a diffeomorphism $f\colon M\to M$, an $f$-invariant probability measure $\mu$ is called \emph{physical} if its basin $$
	B(\mu)=\{x\in M:\lim_{n\to \infty}\frac{1}{n}\sum_{j=1}^n \varphi(f^j(x))=\int \varphi d\mu, \quad\forall\, \varphi\colon M\to \mathbb{R}\in C^0(M,\mathbb{R})\},
	$$ has positive Lebesgue measure.
	
	One of the first examples of physical measures for non-conservative dynamical systems is the SRB measures \cite{Sin72,ruelle76,bowen2008} of hyperbolic systems. The construction of SRB measures was generalized for partially hyperbolic diffeomorphisms. These measures are called $u$-Gibbs measures. An invariant probability measure is a $u$-\emph{Gibbs} if its $u$-disintegration is absolutely continuous with the Lebesgue measure along the unstable leaves.
	For partially hyperbolic diffeomorphisms, every physical measure is a $u$-Gibbs, see \cite{BDV}, but not every $u$-Gibbs is physical.
	
	Theorem~\ref{thm.lebesgue} provides a way to find physical measures that are non-hyperbolic. This appears in \cite{Viana-Yang} and \cite{CP}.
	\begin{theorem}
		Let $f\colon M\to M$ be a $C^r$, $r>1$, partially hyperbolic center-bunched and accessible diffeomorphism which is dynamical coherent and acts quasi-isometric along the center.
		
		Suppose $f$ has minimal $u$-foliation and such that the $cs$-foliation is absolutely continuous. Then, if $\mu$ is a $u$-Gibbs measure with all center exponents equal to zero, $\mu$ is physical.
	\end{theorem}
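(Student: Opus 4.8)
The plan is to derive the statement from Theorem~\ref{thm.lebesgue} together with a Pesin-type basin argument, following \cite{Viana-Yang,CP}. Throughout we assume $\mu$ is ergodic, as is needed to invoke Theorem~\ref{thm.lebesgue}. The first task is to verify the hypotheses of that theorem. Because $\mu$ is $u$-Gibbs, $\supp(\mu)$ is a union of unstable leaves, and minimality of $W^u$ makes every unstable leaf dense; hence $\supp(\mu)=M$, so $\mu$ is fully supported. For the local $cs\times u$ product structure, work in a chart $\Phi_f\colon W^{cs}_\delta(x)\times W^u_\delta(x)\to M$: the $u$-Gibbs property says that on the genuine unstable plaques $\mu$ has densities with respect to $\vol^u$ that are continuous and bounded away from $0$ and $\infty$; the unstable plaques of the chart are identified with $W^u_\delta(x)$ via the center-stable holonomies, which are absolutely continuous precisely because the $cs$-foliation is assumed absolutely continuous, and a Fubini argument then gives $(\Phi_f^{-1})_*\mu\sim\mu^{cs}\times\vol^u$. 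Together with the remaining standing assumptions on $f$ and the vanishing of the center exponents, all hypotheses of Theorem~\ref{thm.lebesgue} hold.

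Applying Theorem~\ref{thm.lebesgue}, we may fix a normalization with $\mu^c_x=\varphi^c_x\,\vol^c_x$ and $0<\varphi^c_x<\infty$; the $u$-Gibbs hypothesis likewise gives $\mu^u_x=\varphi^u_x\,\vol^u_x$ with $0<\varphi^u_x<\infty$. The next step is to promote these to absolute continuity of $\mu$ along the center-unstable leaves. Decomposing a local box as $W^s_\delta(x)\times W^c_\delta(x)\times W^u_\delta(x)$ (nesting a $W^s\times W^c$ product inside the previous $W^{cs}\times W^u$ one), the first step gives $\mu\sim\mu^{cs}\times\vol^u$; disintegrating $\mu^{cs}$ along center plaques, the conditionals are, up to normalization, the center conditionals $\mu^c_x$, hence equivalent to $\vol^c$ with positive density by Theorem~\ref{thm.lebesgue}, while the center plaques inside $W^{cs}_\delta(x)$ are identified through the stable holonomies, which are absolutely continuous since $f$ is center-bunched (see \cite{PSW}). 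Combining these, and using that the sub-foliations of a center-unstable leaf into center and unstable plaques are absolutely continuous, we obtain $\mu\sim\mu^s\times\vol^{cu}$ in these coordinates; in particular the disintegration of $\mu$ along $W^{cu}$ is equivalent to $\vol^{cu}$.

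For the basin argument, observe that $B(\mu)$ is saturated by strong stable leaves: if $y\in W^s(x)$ then $\dist(f^n x,f^n y)\to 0$, so the Birkhoff averages of any continuous function along the orbits of $x$ and $y$ converge to the same limit, whence $x\in B(\mu)$ if and only if $y\in B(\mu)$. By ergodicity $\mu(B(\mu))=1$. Fix $z\in M$ and a local product box $B=\Phi\big(W^s_\delta(z)\times W^{cu}_\delta(z)\big)$, where $\Phi(p,q)$ is the unique point of $W^{cu}_\delta(p)\cap W^s_\delta(q)$. Since $\mu$ is fully supported, $\mu(B)>0$, and since $B(\mu)$ is a union of whole strong stable plaques of $B$, we can write $B(\mu)\cap B=\Phi\big(W^s_\delta(z)\times T\big)$ for some $T\subset W^{cu}_\delta(z)$. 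As $\mu\sim\mu^s\times\vol^{cu}$ in these coordinates and $\mu(B(\mu)\cap B)=\mu(B)>0$, the set $T$ has positive $\vol^{cu}$-measure. Finally, the strong stable foliation of a $C^{1+\alpha}$ partially hyperbolic diffeomorphism is absolutely continuous, so $\vol|_B$ disintegrates along the strong stable plaques with transverse measure equivalent to $\vol^{cu}$; hence $\vol^{cu}(T)>0$ forces $\vol(B(\mu))\ge\vol(B(\mu)\cap B)>0$, i.e.\ $\mu$ is physical.

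The main obstacle is the second step: producing the relation $\mu\sim\mu^s\times\vol^{cu}$, that is, absolute continuity of $\mu$ along $W^{cu}$, from the $u$-Gibbs densities and the center absolute continuity supplied by Theorem~\ref{thm.lebesgue}. This demands a careful bookkeeping of the transverse measures in the two nested Fubini decompositions --- along unstable plaques inside $W^{cu}$, and along center plaques inside $W^{cs}$ --- and it is exactly here that the center-bunching hypothesis is used, via the absolute continuity of the center and center-stable holonomies; the positivity of all the densities involved is what upgrades ``absolutely continuous'' to ``equivalent'', which the concluding basin argument needs both to get $\vol^{cu}(T)>0$ and, ultimately, $\vol(B(\mu))>0$.
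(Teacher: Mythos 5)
Your proposal is correct and follows essentially the same route as the paper's (very terse) proof: full support from minimality of $W^u$, the $cs\times u$ product structure from absolute continuity of the $cs$-foliation together with the $u$-Gibbs densities, Theorem~\ref{thm.lebesgue} to upgrade to absolute continuity along $W^{cu}$, and then the absolute continuity of the stable foliation via a Hopf-type basin argument. You simply supply the details (including the ergodicity caveat and the Fubini bookkeeping) that the paper leaves implicit.
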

	\begin{proof}
		By the minimality of the $u$-foliation, we have that $\mu$ is fully supported. The $cs$-foliation being absolutely continuous implies that $\mu$ has $cs\times u$ product structure. Therefore, we can apply Theorem~\ref{thm.lebesgue} to conclude that the center disintegration is absolutely continuous with the volume measure along the center. The fact that the measure is absolutely continuous with respect to Lebesgue along the center and unstable implies that the disintegration of the measure along the whole center unstable manifold is volume. Then, using the absolute continuity of the stable manifold, we conclude that $\mu$ is physical.
	\end{proof}

	\subsubsection*{Anosov Flows}
	One of the first examples of partially hyperbolic diffeomorphisms are the time one maps, $\varphi_1$, of a geodesic flow, $\varphi_t\colon M\to M$, in a negative curvature manifold $M$. As being partially hyperbolic is an open condition, any diffeomorphism $C^1$ close to $\varphi_1$ is also partially hyperbolic.
	
	A natural question that arises is ``when is this perturbation actually the time one map of a flow?''
	Avila, Viana and Wilkinson~\cite{AVW} study this problem for conservative perturbations. They prove that either the center disintegration of the Lebesgue measure is atomic or $f$ is the time one map of an Anosov flow.
	
	For non-conservative dynamics, Crovisier and Poletti \cite{CP} prove that if $f$ has minimal \mbox{$u$-foliations}, then either $f$ has two hyperbolic measures of maximal entropy or $f$ has a unique measure of maximal entropy, and it is the time one map of a topological Anosov flow.
	
	Both results use a version of the Invariance Principle. Let us give the idea of how to prove these results.
	
	We call a partially hyperbolic map $f\colon M\to M$ a \emph{Discretized Anosov flow (DAF)} if there exists an Anosov flow $\varphi_t\colon M\to M$ such that $\frac{d\varphi_t}{dt}(x)|_{t=0}\in E^c_x$ and a continuous function $\tau\colon M\to (0,\infty)$ such that $f(x)=\varphi_{\tau(x)}(x)$. See \cite{Martinchich} for more details about this class. The perturbation of the time one map of an Anosov flow is a Discretized Anosov flow.
	
	\begin{theorem}
		Let $f\colon M\to M$ be a $C^r$, $r>1$, DAF, accessible such that there exists an invariant probability measure $\mu$ fully supported with zero center Lyapunov exponent and $cs\times u$ product structure. Then, there exists a topological Anosov flow $\psi_t\colon M\to M$, smooth along the center, such that $\psi_1(x)=f(x)$.
	\end{theorem}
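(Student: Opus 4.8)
The plan is to realize $\psi$ as a continuous time change of the smooth Anosov flow $\varphi$ underlying the DAF, the time change being dictated by the center disintegration of $\mu$ supplied by the Invariance Principle. First I would collect the structural facts: being a DAF, $f$ is partially hyperbolic, dynamically coherent with $W^{cs}$, $W^{cu}$ the weak stable and unstable foliations of $\varphi$, acts quasi-isometric along the center, has one-dimensional (hence center bunched) center, and $W^c$ is exactly the orbit foliation of $\varphi$. Passing to an ergodic component of $\mu$ (each still has zero center exponent; the possible loss of full support is a minor point to be checked), the hypotheses of Theorem~\ref{thm.lebesgue} are met, so there is a center disintegration $M=\supp(\mu)\ni x\mapsto\mu^c_x$ which — by the continuous $su$-invariant disintegration constructed in the proof of that theorem — is weak-$*$ continuous, $su$-invariant, and with every $\mu^c_x$ equivalent to $\vol^c$ on $W^c(x)$. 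In the $\varphi$-time parametrization $t\mapsto\varphi_t(x)$ of $W^c(x)$ I write $d\mu^c_x=a(\varphi_t(x))\,dt$; this defines a positive function $a$ on $M$ whose regularity I postpone.

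The crucial step is to trivialize the center Jacobian. I would define $\lambda\colon M\to(0,\infty)$ by $f_*\mu^c_x=\lambda(x)\mu^c_{f(x)}$; since $\mu^c$ is constant along center leaves, so is $\lambda$. Holonomy equivariance $f\circ h^u_{x,y}=h^u_{f(x),f(y)}\circ f$ together with $su$-invariance yields, for $y\in W^u(x)$,
\[
f_*\mu^c_y=(h^u_{f(x),f(y)})_*\,f_*\mu^c_x=\lambda(x)\,(h^u_{f(x),f(y)})_*\mu^c_{f(x)}=\lambda(x)\,\mu^c_{f(y)},
\]
hence $\lambda(y)=\lambda(x)$, and symmetrically along $W^s$; by accessibility $\lambda$ is a constant $c>0$. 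Put $g(x)=\mu^c_x$ of the arc of $W^c(x)$ from $x$ to $f(x)$. Since $f=\varphi_\tau$ with $\tau>0$ moves each point forward along its $\varphi$-orbit and preserves orientation along center leaves, it maps that arc onto the arc from $f(x)$ to $f^2(x)$, so $g\circ f=c^{-1}g$ and thus $g\circ f^{n}=c^{-n}g$; as $g$ is continuous and positive on the compact $M$, this forces $c=1$. Then $f_*\mu^c_x=\mu^c_{f(x)}$, so $g$ is a continuous $f$-invariant function, hence constant (ergodicity and full support), and rescaling $\{\mu^c_x\}$ by that global constant — which preserves continuity and $su$-invariance — I may assume $g\equiv1$, i.e.\ the $\mu^c_x$-mass of the arc from $x$ to $f(x)$ is $1$ for all $x$.

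I would then take $X$ to be the smooth generating field of $\varphi$ and let $\psi$ be the flow of $X/a$. Because $a$ is continuous and bounded above and below, $\psi$ is a complete continuous flow whose orbits are the orbits of $\varphi$, so $\psi$ is a topological Anosov flow (it carries the same weak and strong stable/unstable foliations as $\varphi$), and the $\psi$-time from $x$ to $\varphi_s(x)$ equals the $\mu^c_x$-mass of the arc between them; with $g\equiv1$ this gives $\psi_1(x)=\varphi_{\tau(x)}(x)=f(x)$. In the $\mu^c$-arc-length coordinate on each leaf $\psi$ is the unit translation flow. For the ``smooth along the center'' assertion I would finally show that $a$ is $C^r$ along center leaves with derivatives varying continuously with the leaf: continuity of $a$ on $M$ follows from $su$-invariance (center bunched holonomies are $C^1$ and relate leafwise densities through continuous Jacobians) and from $f_*\mu^c_x=\mu^c_{f(x)}$, which read in $\varphi$-time coordinates is the leafwise cohomological equation $a\circ f\cdot J^cf=a$ with $J^cf=|Df|_{E^c}|$ smooth along leaves; bootstrapping this relation, using $f\in C^r$ and the DAF structure, upgrades $a$ to the required regularity, whence $\psi_t$ is smooth on each center leaf.

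The step I expect to be the real obstacle is this last bootstrap. The equation $a\circ f\cdot J^cf=a$ lives over the center dynamics, which has zero Lyapunov exponent, so the usual Liv\v{s}ic-type argument promoting continuous solutions of cohomological equations to smooth ones — which rests on hyperbolicity and on vanishing periodic-orbit obstructions — is not available, and one must instead exploit the rigid description of a DAF as a reparametrized smooth Anosov flow. Secondary points to settle are that a continuous $su$-invariant disintegration is honestly, and not merely projectively, holonomy invariant, and that the reduction to an ergodic component can be carried out without destroying full support.
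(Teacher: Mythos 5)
Your proposal follows essentially the same route as the paper: apply Theorem~\ref{thm.lebesgue}, normalize the continuous $su$-invariant center disintegration so that the arc from $x$ to $f(x)$ has $\mu^c_x$-mass $1$ (your argument that the Jacobian constant $c$ equals $1$ is precisely the justification the paper compresses into ``it is not difficult to prove that this normalization is $f$ and $su$-invariant''), and define $\psi_t$ by $\mu^c_x([x,\psi_t(x)))=t$, i.e.\ the flow of $\Delta(x)^{-1}Z_0(x)$, which is your $X/a$. The Liv\v{s}ic-type bootstrap you flag as the main obstacle is not actually required: the center leaves are already the smooth orbits of $\varphi$, and the paper only claims (and needs) continuity of the density $\Delta$, deferring the details to \cite{AVW} and \cite{CP}.
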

	\begin{proof}
		By Theorem~\ref{thm.lebesgue}, we know that $\mu^c_x$ is absolutely continuous with respect to Lebesgue.
		For $y\in W^c(x)$, such that $\varphi_s(x)=y$, $s>0$, define $[x,y)=\{\varphi_t(x);0\leq t<s\}$.
		In this case, there exists a canonical normalization such that $\mu^c_x[x,f(x))=1$.
		It is not difficult to prove that this normalization is $f$ and $su$-invariant. Then, we actually have a family of measures $x\mapsto \mu^c_x$ that varies continuously.
		
		Define $\psi_t(x)$ such that $\mu^c_x([x,\psi_t(x)))=t$. Take $Z_0$ a continuous unit vector field tangent to $E^c$, in the same direction as $\frac{d\varphi_t(x)}{dt}|_{t=0}$. Define $\Delta(x)=\frac{d\mu^c_x}{d\vol^c}(x)$, using the same ideas of the proof of Theorem~\ref{thm.lebesgue}, we can prove that $\Delta$ is well defined and continuous. See \cite{AVW} or \cite[Corollary~6.8]{CP} for the details. Then, $\psi_t$ is the flow with vector field \[Z(x)=\Delta(x)^{-1} Z_0(x).\qedhere\]
	\end{proof}

	{\small
		
	}
	
\end{document}